\documentclass[12pt]{article}

\usepackage{amsfonts}
\usepackage{graphicx}
\usepackage{amsmath}
\usepackage{amssymb}
\usepackage[numbers,square]{natbib}
\usepackage{url}
\usepackage{multirow}
\usepackage{color}
\usepackage{amsthm}

\setlength{\textheight}{23cm}
\setlength{\textwidth}{16cm}
\setlength{\topmargin}{-2cm}
\setlength{\oddsidemargin}{-0cm}
\setlength{\evensidemargin}{-0cm}

\newtheorem{thrm}{Theorem}
\newtheorem{crllr}[thrm]{Corollary}
\newtheorem{example}[thrm]{Example}
\newtheorem{lmm}[thrm]{Lemma}
\newtheorem{prpstn}[thrm]{Proposition}

\numberwithin{thrm}{section}
\numberwithin{equation}{section}

\title{Negative dependence and stochastic orderings}
\author{Fraser Daly\footnote{Department of Actuarial Mathematics and Statistics, Heriot-Watt University, Edinburgh EH14 4AS, UK.  E-mail: f.daly@hw.ac.uk;  Tel: +44 (0)131 451 3212; Fax: +44 (0)131 451 3249}}
\date{}

\begin{document}

\maketitle

\noindent{\bf Abstract} 
We explore negative dependence and stochastic orderings, showing that if an integer-valued
random variable $W$ satisfies a certain negative dependence assumption, then $W$ is smaller (in the convex sense) than a Poisson variable of equal mean.  
Such $W$ include those which may be written as a sum of totally negatively dependent indicators.  This
is generalised to other stochastic orderings.  Applications include entropy bounds, Poisson approximation and concentration.  
The proof uses thinning and size-biasing.  We also show how these give a different Poisson approximation result,
which is applied to mixed Poisson distributions.  Analogous results for the binomial
distribution are also presented. 
\vspace{12pt}

\noindent{\bf Key words and phrases:} Thinning; size biasing; $s$--convex ordering; Poisson approximation; entropy.

\vspace{12pt}

\noindent{\bf AMS 2010 subject classification:} 60E15; 62E17; 62E10; 94A17.

\section{Introduction} \label{sec:intro}

Throughout this work we let $W$ be a non-negative, integer-valued random variable with expectation $\lambda>0$.  We focus our attention here on those $W$ which satisfy a certain negative dependence assumption, which we explicitly state in (\ref{eq:order1}) below as a stochastic ordering between $W+1$ and the size-biased version of $W$.  Random variables satisfying this stochastic ordering occur naturally in many applications.  For example, if we may write $W$ as a sum of negatively related Bernoulli random variables, the assumption (\ref{eq:order1}) is satisfied.  Examples of such sums appear in various urn models and occupancy problems, for example.  Several explicit examples of random variables satisfying our negative dependence assumption are discussed in Section \ref{sec:order}.

We are motivated by the work of Daly et al. \cite{dlu12}, who explore links between Stein's method for probability approximation and stochastic orderings.  In their work, as here, these stochastic orderings often reflect the dependence structure of the underlying random variables.  In particular, \cite{dlu12} shows that the stochastic ordering assumption we make here implies a straightforward upper bound on the total variation distance between $W$ and a Poisson random variable.

In this work (and in particular in Section \ref{sec:order} below), we explore further consequences of our stochastic ordering assumption.  In particular, we will see that our negative dependence assumption leads naturally to bounds on the entropy of $W$, concentration inequalities for $W$ and some further Poisson approximation results which complement and enhance those of \cite{dlu12}.  The bounds we derive on entropy generalise entropy maximisation results of Johnson \cite{j07} and Yu \cite{y09}.  See also \cite{jkm13}.  Such results are useful, for example, in understanding probabilistic limit theorems in an information theoretic context. 

Our proofs will make use of the $s$-convex stochastic orders defined by Lef\`evre and Utev \cite{lu96}, which generalise the usual stochastic and convex orderings. We will also need a lemma of Johnson \cite{j07} which links the operations of size-biasing and thinning.  This is stated as Lemma \ref{lem1} later in this section and will be a key tool in what follows.  Further consequences of this lemma will be explored in Section \ref{sec:mp}, where we consider how these thinning and size-biasing results may be applied to Poisson approximation both with and without making any stochastic ordering assumptions.  In particular, we will explore Poisson approximation for a mixed Poisson random variable using these techniques. 

The results and applications we consider in Sections \ref{sec:order} and \ref{sec:mp} are closely related to the Poisson distribution.  This is natural, since Lemma \ref{lem1} is itself closely related to the Poisson distribution.  We will also explore what can be said in relation to the binomial distribution.  This is done in Section \ref{sec:bin}.  We seek the analogues of many of our other results in this case.  For example, under a somewhat different assumption on the dependence  structure of our random variable $W$ to that used in Section \ref{sec:order}, we find binomial approximation results and some further concentration inequalities and bounds on entropy.

We use the remainder of this section to introduce the notation and ideas common to all the work that follows.  We also state the lemma, due to Johnson \cite{j07}, which forms the key to many of the proofs that follow.

For any $\alpha\in[0,1]$, we define the thinning operator $T_\alpha$ by letting $T_\alpha W=\sum_{i=1}^W\eta_i$, where $\eta_1,\eta_2,\ldots$ are iid Bernoulli 
random variables (independent of $W$) with mean $\alpha$.  

Throughout this note, we will let $Z_\mu\sim\mbox{Po}(\mu)$ have a Poisson distribution with mean $\mu$.  The main object we will study in the work that 
follows is the operator $U_\alpha$, given by
\begin{equation}\label{eq:udefn}
U_\alpha W = T_\alpha W +Z_{(1-\alpha)\lambda}\,,
\end{equation}
where $Z_{(1-\alpha)\lambda}$ is independent of all else.  In what follows, for notational convenience we will write $W_\alpha$ for a random variable equal in 
distribution to $U_\alpha W$ for $\alpha\in[0,1]$.  We note that $W_1$ is equal in distribution to $W$, and that $W_0\sim\mbox{Po}(\lambda)$.  

It is easy to see that for any $\alpha\in[0,1]$ we have $\mathbb{E}[W_\alpha]=\mathbb{E}[W]=\lambda$.
We also note that for any $\alpha,\beta\in[0,1]$, $U_\beta(U_\alpha W)$ is equal in distribution to $U_{\alpha\beta} W$.  Finally, it is useful to note that $U_\alpha$ acts trivially on Poisson 
distributions.  That is, $U_\alpha Z_\lambda$ is equal in distribution to $Z_\lambda$ for any $\lambda\geq0$ and $\alpha\in[0,1]$.  
Further properties of the operators $U_\alpha$, and their link with the M/M/$\infty$ queue, are discussed in \cite{j07}.

In what follows, we will also need to employ size biasing.  For any non-negative, integer-valued random variable $W$ with mean $\lambda>0$, we let $W^\star$ denote a 
random variable with the $W$-size-biased distribution, with mass function given by
\begin{equation}\label{eq:sbdef2}
\mathbb{P}(W^\star=j)=\frac{j\mathbb{P}(W=j)}{\lambda}\,,
\end{equation}
for any $j\in\mathbb{Z}^+=\{0,1\ldots\}$.  Equivalently, we may define $W^\star$ by letting
\begin{equation}\label{eq:sbdef}
\mathbb{E}[Wg(W)]=\lambda\mathbb{E}[g(W^\star)]\,,
\end{equation}
for all functions $g:\mathbb{Z}^+\mapsto\mathbb{R}$ for which the expectation exists.  
In a context similar to that considered here, size biasing appears throughout Stein's method for Poisson approximation: we 
refer the interested reader to \cite{bhj92}, \cite{dlu12}, and references therein.  Note that the work we present here is completely distinct from Stein's 
technique, however.

We define the forward difference operator $\Delta$ and its inverse by writing $\Delta f(j)=f(j+1)-f(j)$ and $\Delta^{-1}f(j)=-\sum_{i=j}^\infty f(i)$ for 
$f:\mathbb{Z}^+\mapsto\mathbb{R}$.  Letting $\Delta^0f(j)=f(j)$, we may then define recursively 
$\Delta^nf(j)=\Delta(\Delta^{n-1}f(j))$ and $\Delta^{-n}f(j)=\Delta^{-1}(\Delta^{-n+1}f(j))$ for any $n\geq1$.

We are now in a position to be able to state the following lemma, which appears as Corollary 4.2 of \cite{j07}.
\begin{lmm}\label{lem1}
With $W_\alpha$ as above and $j\in\mathbb{Z}^+$,
$$
\frac{\partial}{\partial\alpha}\mathbb{P}(W_\alpha=j) = \frac{\lambda}{\alpha}\Delta\left[\mathbb{P}(W_\alpha+1=j)-\mathbb{P}(W_\alpha^\star=j)\right]\,.
$$
\end{lmm}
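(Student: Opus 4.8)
The plan is to pass to probability generating functions. Write $G(s)=\mathbb{E}[s^W]$ and $G_\alpha(s)=\mathbb{E}[s^{W_\alpha}]$, and let $'$ denote $\partial/\partial s$. Since the thinning $T_\alpha W$ has generating function $G(1-\alpha+\alpha s)$ and $Z_{(1-\alpha)\lambda}$ has generating function $e^{(1-\alpha)\lambda(s-1)}$, the independence built into (\ref{eq:udefn}) gives
\[
G_\alpha(s)=G(1-\alpha+\alpha s)\,e^{(1-\alpha)\lambda(s-1)}\,.
\]
Every quantity in the statement is a coefficient of $s^j$ in a simple transform of $G_\alpha$: indeed $\sum_j\mathbb{P}(W_\alpha+1=j)s^j=sG_\alpha(s)$, while applying the size-biasing relation (\ref{eq:sbdef}) (together with $\mathbb{E}[W_\alpha]=\lambda$) to $g(k)=s^k$ gives $\sum_j\mathbb{P}(W_\alpha^\star=j)s^j=\lambda^{-1}sG_\alpha'(s)$. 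So it suffices to prove the generating-function identity
\[
\frac{\partial}{\partial\alpha}G_\alpha(s)=\frac{\lambda(s-1)}{\alpha}\left[\frac{1}{\lambda}G_\alpha'(s)-G_\alpha(s)\right]\qquad(\alpha\in(0,1])
\]
and then compare coefficients of $s^j$ on the two sides.

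To prove this identity I would differentiate the product formula for $G_\alpha$ separately in $\alpha$ and in $s$ and eliminate the term $G'(1-\alpha+\alpha s)\,e^{(1-\alpha)\lambda(s-1)}$ that is common to both. Differentiating in $\alpha$ brings down a factor $s-1$ and leaves $(s-1)\bigl[G'(1-\alpha+\alpha s)e^{(1-\alpha)\lambda(s-1)}-\lambda G_\alpha(s)\bigr]$, whereas differentiating in $s$ gives $\alpha\,G'(1-\alpha+\alpha s)e^{(1-\alpha)\lambda(s-1)}+(1-\alpha)\lambda G_\alpha(s)$, which is $G_\alpha'(s)$. Solving the second relation for the common term and inserting it into the first, the two multiples of $G_\alpha(s)$ collapse through $-\tfrac{(1-\alpha)\lambda}{\alpha}-\lambda=-\tfrac{\lambda}{\alpha}$, leaving exactly the displayed identity.

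Finally I would read off coefficients. Multiplication by $s-1$ shifts the index, so $[s^j]\bigl((s-1)G_\alpha(s)\bigr)=\mathbb{P}(W_\alpha=j-1)-\mathbb{P}(W_\alpha=j)=-\Delta\mathbb{P}(W_\alpha+1=j)$ and, in the same way, $[s^j]\bigl((s-1)\lambda^{-1}G_\alpha'(s)\bigr)=-\Delta\mathbb{P}(W_\alpha^\star=j)$ (the $j=0$ case using $\mathbb{P}(W_\alpha^\star=0)=0$); combining these with the correct signs produces the right-hand side $\frac{\lambda}{\alpha}\Delta\bigl[\mathbb{P}(W_\alpha+1=j)-\mathbb{P}(W_\alpha^\star=j)\bigr]$, while the left-hand side is $\frac{\partial}{\partial\alpha}\mathbb{P}(W_\alpha=j)$ once one justifies swapping $\partial/\partial\alpha$ with the expansion in $s$. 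That last swap is the only analytic point, and it is routine: $G_\alpha$ is analytic in $s$ on the open unit disc and the partial derivatives involved are jointly continuous in $(\alpha,s)$ on $(0,1]\times[0,1)$, with the hypothesis $\lambda=G'(1)<\infty$ taking care of $s\uparrow1$. The real place to be careful is the index bookkeeping, making the two $(s-1)$-induced differences assemble into one application of $\Delta$ with the signs in the stated order. (An alternative, essentially Johnson's route, is to note that after the reparametrisation $\alpha=e^{-t}$ the family $U_\alpha$ is the transition semigroup of an M/M/$\infty$ queue and to identify its generator; the computation above is just that generator written out explicitly, so I would present the self-contained generating-function version.)
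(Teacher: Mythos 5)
Your argument is correct, and it is worth noting that the paper does not actually prove Lemma \ref{lem1} at all: it imports the statement as Corollary 4.2 of Johnson \cite{j07}, whose derivation goes through the identification of $\{U_{e^{-t}}\}_{t\ge 0}$ with the M/M/$\infty$ transition semigroup and a direct manipulation of the mass functions. What you supply is therefore a self-contained substitute, and the generating-function computation checks out: with $G_\alpha(s)=G(1-\alpha+\alpha s)e^{(1-\alpha)\lambda(s-1)}$ one indeed gets $\partial_\alpha G_\alpha(s)=(s-1)\bigl[G'(1-\alpha+\alpha s)e^{(1-\alpha)\lambda(s-1)}-\lambda G_\alpha(s)\bigr]$ and $G_\alpha'(s)=\alpha G'(1-\alpha+\alpha s)e^{(1-\alpha)\lambda(s-1)}+(1-\alpha)\lambda G_\alpha(s)$, and eliminating the common term collapses the coefficients of $G_\alpha(s)$ via $\tfrac{(1-\alpha)\lambda}{\alpha}+\lambda=\tfrac{\lambda}{\alpha}$, exactly as you say. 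The identification of $\lambda^{-1}sG_\alpha'(s)$ as the generating function of $W_\alpha^\star$ is legitimate because $\mathbb{E}[W_\alpha]=\lambda$ for every $\alpha$, and your index bookkeeping at $j=0$ is right since $\mathbb{P}(W_\alpha+1=0)=\mathbb{P}(W_\alpha^\star=0)=0$, so the two $(s-1)$-shifts really do assemble into a single forward difference $\Delta$ with the stated signs. The only point you wave at rather than carry out is the interchange of $\partial/\partial\alpha$ with the power-series expansion; this is routine (e.g.\ differentiate the explicit double sum for $\mathbb{P}(W_\alpha=j)$ term by term, with domination supplied by $\mathbb{E}[W]<\infty$), but for $\alpha$ near $0$ one should note that the factor $\lambda/\alpha$ on the right is compensated by the $\Delta$-difference vanishing at the same rate, so the identity is genuinely an identity on $(0,1]$ as claimed. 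Relative to Johnson's semigroup route, your version buys transparency and independence from the queueing interpretation at the cost of the structural insight (reversibility of the M/M/$\infty$ dynamics) that motivates the lemma in \cite{j07}; either is acceptable, and your remark that the two are the same generator written in different coordinates is accurate.
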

Lemma \ref{lem1} relates the operations of thinning and size biasing, and will be used in establishing stochastic ordering and Poisson approximation results in Sections \ref{sec:order} and \ref{sec:mp}.  A result analogous to Lemma \ref{lem1} will also be needed for the results established in the binomial case and presented in
Section \ref{sec:bin}.  

\section{Negative dependence and convex orderings} \label{sec:order}

In this section we consider the relationship between negative dependence and stochastic ordering.  We will make use of the $s$-convex orderings, defined by Lef\`evre and Utev 
\cite{lu96} for any integer $s\geq1$.  Letting $X$ and $Y$ be non-negative integer-valued random variables, we write $X\leq_{s-cx}Y$ if 
$\mathbb{E}f(X)\leq\mathbb{E}f(Y)$ for all $f\in\mathcal{F}_s$, where
$$
\mathcal{F}_s = \left\{f:\mathbb{Z}^+\mapsto\mathbb{R} \,|\, \Delta^if(j)\geq0\mbox{ for all }j\in\mathbb{Z}^+\mbox{ and }i=1,\ldots,s\right\}\,.
$$
Note that the case $s=1$ corresponds to the
usual stochastic ordering (often denoted by $X\leq_{st}Y$ in what follows) and the case $s=2$ is the increasing convex ordering, written $X\leq_{icx}Y$.  For future 
use, we recall also the standard result that if $\mathbb{E}X=\mathbb{E}Y$ and $X\leq_{icx}Y$ then $X\leq_{cx}Y$, where this denotes the usual convex ordering 
of such random variables.  The interested reader is referred to \cite{ss07} for an introduction to the subject of stochastic orderings.   

Daly et al. \cite{dlu12} give bounds on the Poisson approximation of $W$ in total variation distance under the assumption that 
\begin{equation}\label{eq:order1}
W^\star\leq_{s-cx}W+1\,,
\end{equation}
for some $s\in\mathbb{N}=\{1,2\ldots\}$, where $W^\star$ is defined by (\ref{eq:sbdef2}).  The main result of this section (Theorem \ref{thm:order}) is that the ordering assumption (\ref{eq:order1}) implies an 
ordering between $W$ and a 
Poisson random variable of the same mean.  This yields as an immediate corollary some bounds on Poisson approximation for $W$ and a concentration inequality for $W$.  
From Theorem \ref{thm:order}, we may also derive an upper bound on the entropy of $W$, and hence generalise results of \cite{j07} and \cite{y09}.

Before proceeding further, we note that the stochastic ordering (\ref{eq:order1}) with $s=1$ is closely related to well-known, often applied concepts of negative dependence.  For example, 
if $W=X_1+\cdots+X_n$ for some (dependent) Bernoulli random variables $X_1,\ldots,X_n$ such that
\begin{equation}\label{eq:tnd}
\mbox{Cov}(f(X_i),g(W-X_i))\leq0\,,
\end{equation}
for each $i$ and all increasing functions $f,g:\mathbb{Z}^+\mapsto\mathbb{R}$ then $W+1\geq_{st}W^\star$.  See \cite{pp02} and \cite{dlu12}, where the property 
(\ref{eq:tnd}) is referred to as total negative dependence.  

Recall that Bernoulli random variables $X_1,\ldots,X_n$ are said to be negatively related if 
\begin{equation}\label{eq:nr}
\mathbb{E}\left[\phi(X_1,\ldots,X_{i-1},X_{i+1},\ldots,X_n)|X_i=1\right]\leq\mathbb{E}\left[\phi(X_1,\ldots,X_{i-1},X_{i+1},\ldots,X_n)\right]\,,
\end{equation}
for each $i$ and all increasing functions $\phi:\{0,1\}^{n-1}\mapsto\mathbb{R}$.

Papadatos and Papathanasiou \cite{pp02} showed that if $X_1,\ldots,X_n$ are negatively related then (\ref{eq:tnd}) holds, and hence the stochastic ordering (\ref{eq:order1}) holds with $s=1$.  There are thus many examples and applications which fit into this framework.  We give some illustrative examples below.  In each of these
examples the random variable $W$ may be written as a sum of negatively related Bernoulli variables, and therefore satisfies $W+1\geq_{st}W^\star$.  The negative relation property 
may be established by a straightforward and natural coupling argument in each case. 
\renewcommand{\labelenumi}{(\roman{enumi})}
\begin{enumerate}
\item If $W=X_1+\cdots+X_n$, where $X_1,\ldots,X_n$ are independent Bernoulli random variables then clearly (\ref{eq:nr}) holds.
\item If $W$ has a hypergeometric distribution then Barbour et al. \cite[Section 6.1]{bhj92} show that $W$ may be written as a sum of negatively related Bernoulli random variables.
\item More generally, if we distribute $m$ balls uniformly into $n$ urns and let $W$ count the number of urns which contain at least $c$ balls, Papadatos and Papathanasiou \cite[Section 4]{pp02} show that $W$ may be written as a sum of negatively related Bernoulli random variables.
\item Suppose we have an urn which initially contains balls of $n$ different colours.  We proceed by P\'olya sampling: on each of $m$ draws we choose a ball uniformly 
from the urn, note its colour and return it to the urn along with an additional ball of the same colour.  Let $X_i$ be the indicator that no ball of colour $i$ was 
seen during these $m$ draws.  Then $X_1,\ldots,X_n$ are negatively related: see \cite[Section 6.3]{bhj92}.  Here $W=X_1+\cdots+X_n$ counts the total number of 
colours not seen during the $m$ draws.
\item Consider the following matrix occupancy problem. Suppose we have an $r\times n$ matrix and in row $k$ we place $s_k$ 1s, their positions being chosen by 
uniform sampling without replacement.  All remaining entries of the matrix are set to 0.  Let $T_i$ count the number of 1s in column $i$ and $X_i=I(T_i\leq m)$, the
indicator that column $i$ contains at most $m$ nonzero entries.  Then $W=X_1+\cdots+X_n$ counts the number of such columns.  Barbour et al. \cite{bhj92} show in Section 6.4 that 
$X_1,\ldots,X_n$ are negatively related.
\item Distribute $n$ points uniformly on the circumference of a circle.  Let $S_1,\ldots,S_n$ be the arc-length distances between adjacent points and 
$X_i=I(S_i<a)$, the indicator that $S_i$ falls below some threshold $a$.  Then \cite[Section 7.1]{bhj92} shows that $X_1,\ldots,X_n$ are negatively related.  Their 
sum $W$ counts the number of small spacings on our circle. 
\item Let $(\sigma_1,\ldots,\sigma_n)$ be a permutation of $\{1,\ldots,n\}$ drawn uniformly from the group of such permutations.  Let $X_i=I(\sigma_i\leq a_i)$ for 
some given $a_1,\ldots,a_n$, and $W=X_1+\cdots+X_n$.  In Section 4.1, \cite{bhj92} shows that $X_1,\ldots,X_n$ are negatively related.
\end{enumerate}
In each of these examples we may apply the results of this section.  For further discussion of these examples, and many others, we refer 
the reader to \cite{bhj92,pp02,ab13}, and references therein.  

We now state the main result of this section, Theorem \ref{thm:order}.  In Theorem \ref{thm:order2} we give a slightly stronger result for the case $s=1$.  The proofs of these theorems are deferred until Section \ref{subsec:orderproof}, before which we consider some applications and corollaries.  Note that throughout what follows we let $\binom{a}{b}=0$ if $b>a$. 

\begin{thrm}\label{thm:order}
Let $W$ be a non-negative, integer-valued random variable with $\mathbb{E}[W]=\lambda>0$ and 
$$
\mathbb{E}\binom{W}{k}\leq\mathbb{E}\binom{Z_\lambda}{k}\,,\hspace{20pt} k=3,\ldots,s\,, 
$$
for some $s\in\mathbb{N}$, where $Z_\lambda\sim\mbox{Po}(\lambda)$.  Let $W^\star$ be defined by (\ref{eq:sbdef2}).  If $W^\star\leq_{s-cx}W+1$ then $W\leq_{(s+1)-cx}Z_\lambda$.
\end{thrm}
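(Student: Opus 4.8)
The plan is to deduce $W \leq_{(s+1)-cx} Z_\lambda$ by running the thinning flow $\alpha \mapsto W_\alpha$ from $W$ (at $\alpha=1$) to $\mathrm{Po}(\lambda)$ (at $\alpha=0$). Fix $f \in \mathcal{F}_{s+1}$ with the relevant expectations finite and set $\phi(\alpha) = \mathbb{E}[f(W_\alpha)]$ for $\alpha \in (0,1]$; since $W_1 \stackrel{d}{=} W$ and $W_\alpha \to Z_\lambda$ in distribution as $\alpha \downarrow 0$ (with the moment hypotheses securing convergence of expectations), it is enough to show $\int_0^1 \phi'(\alpha)\, d\alpha = \phi(1) - \phi(0) \leq 0$. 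I would first differentiate under the sum and apply Lemma \ref{lem1}; a summation by parts (the boundary term at $0$ vanishes since $\mathbb{P}(W_\alpha^\star = 0) = 0$, and the one at infinity since $f$ has finite expectation against the difference of the two probability mass functions) gives
\[
\phi'(\alpha) = \frac{\lambda}{\alpha}\, \mathbb{E}\big[\Delta f(W_\alpha^\star - 1) - \Delta f(W_\alpha)\big], \qquad \Delta f \in \mathcal{F}_s .
\]

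The bracketed quantity is in general not of one sign, so rather than bounding the integrand I would exploit cancellation in the integral via the discrete Taylor (Newton) expansion underlying the $s$-convex orders (cf.\ \cite{lu96}): every $f \in \mathcal{F}_{s+1}$ may be written as
\[
f(j) = f(0) + \sum_{k=1}^{s} \Delta^k f(0) \binom{j}{k} + \sum_{m \geq 0} \Delta^{s+1} f(m) \binom{j-1-m}{s},
\]
a combination with non-negative coefficients of the functions $\binom{\cdot}{k}$ $(1\leq k\leq s)$ and $j \mapsto \binom{j-1-m}{s}$ $(m\geq 0)$, each of which lies in $\mathcal{F}_{s+1}$ (here the convention $\binom{a}{b}=0$ for $b>a$ makes $j\mapsto\binom{j-1-m}{s}$ the usual truncated power). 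By linearity, $\phi(1) - \phi(0) \leq 0$ reduces to two claims: $\mathbb{E}\binom{W}{k} \leq \mathbb{E}\binom{Z_\lambda}{k}$ for $k = 1,\dots,s$, and $\mathbb{E}\binom{W-1-m}{s} \leq \mathbb{E}\binom{Z_\lambda - 1 - m}{s}$ for every $m \geq 0$. The first claim is essentially free: for $k=1$ it is the equality $\mathbb{E}W = \lambda$; for $k=2$ it is $\mathrm{Var}(W) \leq \lambda$, which follows from $W^\star \leq_{s-cx} W+1$ applied to the test function $j \mapsto j \in \mathcal{F}_s$ (yielding $\mathbb{E}[W^\star] = \mathbb{E}[W^2]/\lambda \leq \lambda+1$); and for $k = 3,\dots,s$ it is exactly the hypothesis.

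The remaining ``spline'' inequalities $\mathbb{E}\binom{W-1-m}{s} \leq \mathbb{E}\binom{Z_\lambda-1-m}{s}$ are, I expect, the crux. I would again invoke Lemma \ref{lem1}: writing $\psi_m(\alpha) = \mathbb{E}\binom{W_\alpha - 1 - m}{s}$ and using $\Delta\big[\binom{\cdot - 1 - m}{s}\big](j) = \binom{j-1-m}{s-1}$ gives $\psi_m'(\alpha) = \frac{\lambda}{\alpha}\mathbb{E}\big[\binom{W_\alpha^\star - 2 - m}{s-1} - \binom{W_\alpha - 1 - m}{s-1}\big]$; expanding $W_\alpha^\star$ through $\mathbb{P}(W_\alpha^\star=j) = j\,\mathbb{P}(W_\alpha=j)/\lambda$ and the identity $n\binom{n-1}{r} = (r+1)\binom{n}{r+1}$ re-expresses this integrand in terms of $\mathbb{E}\binom{W_\alpha - 1 - m'}{s}$ and $\mathbb{E}\binom{W_\alpha - 1 - m'}{s-1}$, and integrating, combined with $W^\star \leq_{s-cx} W+1$ applied to the genuinely $s$-convex test functions $j \mapsto \binom{j - 1 - m'}{s-1}$ and $j\mapsto\binom{j-1-m'}{s}$, should close the estimate, with the binomial-moment hypotheses absorbing the lower-order polynomial terms that arise. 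The main obstacle is precisely this last step: the naive ``monotone flow'' argument (trying to show $\phi'(\alpha) \leq 0$ pointwise) fails, and one must instead extract these level-$s$ spline inequalities from the single assumption $W^\star \leq_{s-cx} W+1$, which is where the full force of Lemma \ref{lem1} is needed.
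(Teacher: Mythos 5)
Your reduction of $W\leq_{(s+1)-cx}Z_\lambda$ to the factorial-moment inequalities for $k=1,\ldots,s$ together with the spline inequalities $\mathbb{E}\binom{W-1-m}{s}\leq\mathbb{E}\binom{Z_\lambda-1-m}{s}$, $m\geq0$, is exactly the paper's first step (it is Proposition 2.5 of Lef\`evre and Utev; your splines are the quantities $h_{s+1}(\cdot,j)$, $j\geq s+1$), and your treatment of the moment part coincides with the paper's. The gap is at the point you yourself flag as the crux, and the scheme you sketch there is not the one that works. The missing idea is that the hypothesis $W^\star\leq_{s-cx}W+1$ \emph{propagates along the thinning flow}: $W_\alpha^\star\leq_{s-cx}W_\alpha+1$ for every $\alpha\in[0,1]$. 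The paper establishes this in Lemmas \ref{lem:order1}--\ref{lem:order3}, using closure of the $s$-convex orders under thinning, the commutation of $T_\alpha$ with the operator $VX=X^\star-1$, closure under convolution of independent summands each satisfying the ordering, and the fact that the Poisson component $Z_{(1-\alpha)\lambda}$ satisfies it trivially. Granted this, your own derivative formula
$$
\psi_m'(\alpha)=\frac{\lambda}{\alpha}\left\{\mathbb{E}\,g_m(W_\alpha^\star)-\mathbb{E}\,g_m(W_\alpha+1)\right\},\qquad g_m(x)=\binom{x-2-m}{s-1},
$$
has a pointwise non-positive integrand, because $g_m\in\mathcal{F}_s$ (its differences of orders $1,\ldots,s-1$ are $\binom{x-2-m}{s-1-i}$ and its $s$-th difference is an indicator, all non-negative) and no shift of the test function is needed. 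Integrating over $\alpha\in[0,1]$ then yields each spline inequality directly. So the ``monotone flow'' argument you dismiss is precisely the paper's proof: it fails only if one tries to run it for a general $f\in\mathcal{F}_{s+1}$ in one pass (which is why the moments are peeled off separately), and only if the ordering has not first been transported from $W$ to $W_\alpha$.

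By contrast, the route you propose applies the hypothesis only to $W$ itself and then tries to close a system of identities among the $W_\alpha$-quantities. Expanding the size bias gives
$$
\mathbb{E}\binom{W_\alpha^\star-2-m}{s-1}=\frac{1}{\lambda}\left\{s\,\mathbb{E}\binom{W_\alpha-1-m}{s}+(1+m)\,\mathbb{E}\binom{W_\alpha-2-m}{s-1}\right\},
$$
which couples $\psi_m$ to lower-order spline expectations at the shifted index $m+1$, all evaluated at $W_\alpha$; the bare hypothesis $W^\star\leq_{s-cx}W+1$ says nothing about these, and the recursion does not close. The closure lemmas for the $s$-convex order under thinning and convolution are therefore not an optional refinement but the essential content your argument is missing.
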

\begin{thrm}\label{thm:order2}
Let $W$ be a non-negative, integer-valued random variable with $\mathbb{E}[W]=\lambda>0$.  Let $W^\star$ be defined by (\ref{eq:sbdef2}).  If $W^\star\leq_{st}W+1$ then $W_\alpha \leq_{cx} W_\beta$ for 
$\alpha\geq\beta$.  In particular, $W\leq_{cx}Z_\lambda$, where $Z_\lambda\sim\mbox{Po}(\lambda)$. 
\end{thrm}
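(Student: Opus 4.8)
The plan is to derive the convex ordering from the increasing convex ordering using equality of means, and to obtain the latter by showing that, for each fixed integer $t\geq0$, the function $\psi_t(\alpha):=\mathbb{E}(W_\alpha-t)^{+}$ is non-increasing on $[0,1]$. Since it suffices, for the increasing convex order on $\mathbb{Z}^{+}$, to check $\mathbb{E}(W_\alpha-t)^{+}\leq\mathbb{E}(W_\beta-t)^{+}$ for all such $t$, this gives $W_\alpha\leq_{icx}W_\beta$ whenever $\alpha\geq\beta$, and since $\mathbb{E}W_\alpha=\mathbb{E}W_\beta=\lambda$ the standard result recalled in the introduction upgrades this to $W_\alpha\leq_{cx}W_\beta$; the final assertion is then the case $\alpha=1$, $\beta=0$ (it is also the $s=1$ instance of Theorem \ref{thm:order}). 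To prove $\psi_t$ is non-increasing I would differentiate in $\alpha$ via Lemma \ref{lem1}, using the representation $\psi_t(\alpha)=\sum_{j\geq t}\mathbb{P}(W_\alpha>j)$.

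Summing the identity of Lemma \ref{lem1} over $i\geq j+1$ telescopes and yields $\frac{\partial}{\partial\alpha}\mathbb{P}(W_\alpha>j)=-\frac{\lambda}{\alpha}\big[\mathbb{P}(W_\alpha=j)-\mathbb{P}(W_\alpha^{\star}=j+1)\big]$; the interchange of derivative and sum here is routine since $\sum_{i}\big|\frac{\partial}{\partial\alpha}\mathbb{P}(W_\alpha=i)\big|$ is finite, uniformly on compact subsets of $(0,1]$, because the bracketed differences are dominated by probabilities summing to at most $2$. Summing this over $j\geq t$ and telescoping once more gives
\[
\psi_t'(\alpha)=-\frac{\lambda}{\alpha}\big[\mathbb{P}(W_\alpha\geq t)-\mathbb{P}(W_\alpha^{\star}>t)\big]=\frac{\lambda}{\alpha}\big[\mathbb{P}(W_\alpha^{\star}>t)-\mathbb{P}(W_\alpha+1>t)\big]\,.
\]
Hence $\psi_t'(\alpha)\leq0$ as soon as $W_\alpha^{\star}\leq_{st}W_\alpha+1$, so $\psi_t$ is non-increasing on $(0,1]$; continuity at $0$, which extends the conclusion to $\beta=0$, follows from uniform integrability, e.g.\ by dominating $W_\alpha$ by $W+Z_\lambda$ in the coupling that thins both $W$ and an independent copy of $Z_\lambda$ (and noting $W_\alpha\Rightarrow Z_\lambda$ as $\alpha\downarrow0$).

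The crux is therefore to propagate the hypothesis through the family: to show that $W^{\star}\leq_{st}W+1$ forces $W_\alpha^{\star}\leq_{st}W_\alpha+1$ for every $\alpha\in(0,1]$. I would obtain this from three facts. (i) A short computation with the defining relation (\ref{eq:sbdef}) and the exchangeability of the thinning indicators gives $(T_\alpha X)^{\star}\stackrel{d}{=}1+T_\alpha(X^{\star}-1)$; since $W^{\star}\leq_{st}W+1$ is equivalent to $W^{\star}-1\leq_{st}W$ and thinning preserves $\leq_{st}$, this yields $(T_\alpha W)^{\star}\leq_{st}T_\alpha W+1$. (ii) $Z_\mu^{\star}\stackrel{d}{=}Z_\mu+1$, so the Poisson summand $Z_{(1-\alpha)\lambda}$ satisfies the same inequality. (iii) For independent $A,B$ the law of $(A+B)^{\star}$ is the mixture of those of $A^{\star}+B$ and $A+B^{\star}$ with weights $\mathbb{E}A/\mathbb{E}(A+B)$ and $\mathbb{E}B/\mathbb{E}(A+B)$. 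Applying (iii) with $A=T_\alpha W$, $B=Z_{(1-\alpha)\lambda}$, each component of the mixture is $\leq_{st}W_\alpha+1$ by (i) and (ii), and a mixture of laws each $\leq_{st}W_\alpha+1$ is itself $\leq_{st}W_\alpha+1$, which is exactly what is needed. (That the class $\{W:W^{\star}\leq_{st}W+1\}$ is closed under the operators $U_\alpha$ may already be implicit in \cite{j07,dlu12}.) I expect this propagation step --- rather than the differentiation, which is routine once the interchanges are justified --- to be the main point requiring care.
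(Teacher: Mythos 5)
Your proposal is correct and follows essentially the same route as the paper: the paper likewise propagates the hypothesis to $W_\alpha^\star\leq_{s-cx}W_\alpha+1$ via the commutation $(T_\alpha W)^\star-1=T_\alpha(W^\star-1)$, the identity $Z_\mu^\star\stackrel{d}{=}Z_\mu+1$ and the mixture representation of $(X_1+X_2)^\star$ (its Lemmas \ref{lem:order2} and \ref{lem:order3}), and then applies $\Delta^{-2}$ to Lemma \ref{lem1} and integrates over $[\beta,\alpha]$, which is exactly your stop-loss computation $\psi_t'(\alpha)=\frac{\lambda}{\alpha}\left[\mathbb{P}(W_\alpha^\star>t)-\mathbb{P}(W_\alpha+1>t)\right]\leq0$ phrased via the functions $h_2$ and Proposition 2.5 of \cite{lu96}. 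The only differences are cosmetic (your explicit attention to the interchange of limits and to continuity at $\alpha=0$ is extra care the paper leaves implicit).
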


\subsection{Applications to bounds on entropy} \label{subsec:entropy}

We use this section to give some applications of our Theorem \ref{thm:order2} to upper bounds for entropy.  The bounds we establish
generalise results of \cite{j07} and \cite{y09}.  See also \cite{jkm13}.  
We define the entropy $H(W)$ of a non-negative, integer-valued random variable $W$ in the usual way, although for 
convenience we take natural logarithms. 
$$
H(W)=-\sum_{i=0}^\infty\mathbb{P}(W=i)\log(\mathbb{P}(W=i))\,.
$$
For the random variables we consider here, results are stated which compare their entropy to that of a Poisson random variable with the same mean.  Although no closed-form expression exists for $H(Z_\lambda)$, there are several bounds on this quantity available in the literature.  For example, there is the well-known bound
$$
H(Z_\lambda)\leq \frac{1}{2}\log\left(2\pi e\left(\lambda+\frac{1}{12} \right)\right)\,.
$$ 

In the results that follow, we will also need the notion of log-concavity for a non-negative, integer-valued random variable.  Recall that such a random
variable $W$ is log-concave if its support is an interval in $\mathbb{Z}^+$, and its mass function forms a log-concave sequence.  That is, 
$$
\mathbb{P}(W=i)^2\geq\mathbb{P}(W=i-1)\mathbb{P}(W=i+1)\,,
$$
for all integers $i\geq1$. 
\begin{crllr}\label{cor:entropy1}
Let $W$ be a non-negative, integer-valued random variable with $\mathbb{E}[W]=\lambda>0$.  Let $Z_\lambda\sim\mbox{Po}(\lambda)$.  If $W+1\geq_{st}W^\star$ then
\begin{equation}\label{eq:ent1}
H(W) \leq H(Z_\lambda)\,.
\end{equation}
\end{crllr}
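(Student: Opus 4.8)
The plan is to deduce the corollary from Theorem \ref{thm:order2}: under the hypothesis $W+1\ge_{st}W^\star$, that theorem gives the convex ordering $W\le_{cx}Z_\lambda$, and the entropy bound then follows by comparing $H(W)$ and $H(Z_\lambda)$ to a common ``cross-entropy'' quantity against the Poisson law.

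In detail, write $p_i=\mathbb{P}(W=i)$ and $q_i=\mathbb{P}(Z_\lambda=i)=e^{-\lambda}\lambda^i/i!$. By Gibbs' inequality (equivalently, non-negativity of the relative entropy $\sum_i p_i\log(p_i/q_i)$) we have $H(W)=-\sum_{i\ge0}p_i\log p_i\le-\sum_{i\ge0}p_i\log q_i$, with equality when $W$ is itself $\mbox{Po}(\lambda)$. Since $\log q_i=-\lambda+i\log\lambda-\log(i!)$ and $\mathbb{E}[W]=\lambda$, the right-hand side equals $\lambda-\lambda\log\lambda+\mathbb{E}[\log(W!)]$, and the same computation applied to $Z_\lambda$ (where Gibbs' inequality is an equality) gives $H(Z_\lambda)=\lambda-\lambda\log\lambda+\mathbb{E}[\log(Z_\lambda!)]$. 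So it is enough to prove $\mathbb{E}[\log(W!)]\le\mathbb{E}[\log(Z_\lambda!)]$. For this, observe that $f(j)=\log(j!)$ belongs to $\mathcal{F}_2$: its forward differences are $\Delta f(j)=\log(j+1)\ge0$ and $\Delta^2 f(j)=\log\frac{j+2}{j+1}\ge0$ for every $j\in\mathbb{Z}^+$, so $f$ is increasing and convex. Hence $W\le_{cx}Z_\lambda$ yields $\mathbb{E}[\log(W!)]\le\mathbb{E}[\log(Z_\lambda!)]$, and stringing the three relations together gives $H(W)\le H(Z_\lambda)$, which is (\ref{eq:ent1}).

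The only delicate point is integrability: the manipulations above presuppose $\mathbb{E}[\log(W!)]<\infty$, which then forces $H(W)<\infty$ as well (otherwise the non-negative relative entropy would come out negative). This costs nothing extra, since $\log(j!)$ is a non-negative member of $\mathcal{F}_2$, so $W\le_{cx}Z_\lambda$ already forces $\mathbb{E}[\log(W!)]\le\mathbb{E}[\log(Z_\lambda!)]<\infty$, the Poisson having moments of all orders. Beyond checking this bookkeeping I do not anticipate any real obstacle: the substance of the corollary is entirely carried by the convex ordering supplied by Theorem \ref{thm:order2}, the passage from that ordering to the entropy bound being short and standard. The same scheme, with the convex order now coming ``for free'' from the negative dependence hypothesis rather than from log-concavity, is what lets one recover and generalise the Poisson entropy-maximality results of \cite{j07} and \cite{y09}.
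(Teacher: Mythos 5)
Your proposal is correct and follows essentially the same route as the paper: both deduce $W\leq_{cx}Z_\lambda$ from Theorem \ref{thm:order2} and then pass from the convex ordering to the entropy inequality using the log-concavity of the Poisson law. The only difference is that where the paper simply cites Lemma 1 of \cite{y09} for this last step, you prove the required instance inline via the standard Gibbs/cross-entropy argument, using that $-\log\mathbb{P}(Z_\lambda=j)=\lambda-j\log\lambda+\log(j!)$ is (up to an affine term) the convex function $\log(j!)$ -- a correct and slightly more self-contained rendering of the same idea.
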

\begin{proof} 
Since $W\leq_{cx}Z_\lambda$ (by Theorem \ref{thm:order2}) and $Z_\lambda$ is a log-concave random variable, the result follows from Lemma 1 
of \cite{y09}.
\end{proof}
Corollary \ref{cor:entropy1} shows that $Z_\lambda$ maximises the entropy within our class of $W$ with expectation $\lambda$ and such that $W+1\geq_{st}W^\star$.  
We again note that the conclusion of Corollary \ref{cor:entropy1} holds if $W$ may be written as a sum of totally negatively dependent (or negatively related) 
Bernoulli random variables, as in our examples above.  Such maximum entropy results are of importance in understanding probabilistic limit theorems in an information 
theoretic context.  For further discussion of this, we refer the reader to \cite{j07} and references therein.

Corollary \ref{cor:entropy1} generalises Theorem 2.5 of \cite{j07}, which states that (\ref{eq:ent1}) holds under the assumption that $W$ is 
ultra log-concave (of degree $\infty$), denoted ULC($\infty$) in what follows.  Recall that $W$ is ULC($\infty$) if
$$
(j+1)!^2\mathbb{P}(W=j+1)^2 \geq j!(j+2)!\mathbb{P}(W=j)\mathbb{P}(W=j+2),\hspace{15pt}j\geq0\,,
$$
or, equivalently, if
$$
\frac{(j+1)\mathbb{P}(W=j+1)}{\mathbb{P}(W=j)}\,,
$$
is increasing in $j$.  We note that this is equivalent to $W+1\geq_{lr}W^\star$, where `$\geq_{lr}$' denotes the likelihood ratio ordering.  Since this is 
stronger than stochastic ordering \cite[Theorem 1.C.1]{ss07}, our Corollary \ref{cor:entropy1} strengthens Theorem 2.5 of \cite{j07}.  Similarly, Corollary 
\ref{cor:entropy2} below generalises Theorem 3 of \cite{y09}.  See also \cite{jkm13}.

\begin{crllr}\label{cor:entropy2}
Let $W$ be a non-negative, integer-valued random variable such that $\mathbb{E}[W]=\lambda>0$ and $W+1\geq_{st}W^\star$.  Let $Z_\lambda\sim\mbox{Po}(\lambda)$ and 
$X_1,X_2,\ldots,$ be iid non-negative, integer-valued random variables.  Let
$$
\widehat{W}=\sum_{i=1}^WX_i\,,\hspace{20pt}\mbox{ and }\hspace{20pt}\widehat{Z_\lambda}=\sum_{i=1}^{Z_\lambda}X_i\,.
$$
If $\widehat{Z_\lambda}$ is log-concave, then $H(\widehat{W})\leq H(\widehat{Z_\lambda})$.
\end{crllr}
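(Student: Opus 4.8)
The plan is to deduce this from Theorem \ref{thm:order2} in exactly the manner of Corollary \ref{cor:entropy1}, after first transferring the convex ordering from $(W,Z_\lambda)$ to the compound sums $(\widehat{W},\widehat{Z_\lambda})$. The only point needing genuine verification is that i.i.d.\ compounding preserves the convex order; given that, the conclusion will be immediate from Lemma~1 of \cite{y09} together with the assumed log-concavity of $\widehat{Z_\lambda}$.

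Concretely, I would proceed as follows. By Theorem \ref{thm:order2} and the hypothesis $W+1\geq_{st}W^\star$, we have $W\leq_{cx}Z_\lambda$. Fix any convex $f:\mathbb{Z}^+\mapsto\mathbb{R}$, i.e.\ $\Delta^2 f\geq0$, and set $g(n)=\mathbb{E}\left[f\left(X_1+\cdots+X_n\right)\right]$, the expectation being over the i.i.d.\ sequence $(X_i)$, which is independent of $W$ and $Z_\lambda$. Conditioning on the number of summands gives $\mathbb{E}[f(\widehat{W})]=\mathbb{E}[g(W)]$ and $\mathbb{E}[f(\widehat{Z_\lambda})]=\mathbb{E}[g(Z_\lambda)]$, so it suffices to show that $g$ is convex. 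Writing $S_n=X_1+\cdots+X_n$ and introducing two further independent copies $X_{n+1},X_{n+2}$, the i.i.d.\ property yields
\[
\Delta^2 g(n)=\mathbb{E}\left[f(S_n+X_{n+1}+X_{n+2})-f(S_n+X_{n+1})-f(S_n+X_{n+2})+f(S_n)\right]\,,
\]
and for non-negative integers $s,a,b$ one has $f(s+a+b)-f(s+b)\geq f(s+a)-f(s)$, since the forward differences of a convex sequence are non-decreasing; applying this pointwise with $s=S_n$, $a=X_{n+1}$, $b=X_{n+2}$ shows $\Delta^2 g(n)\geq0$. Hence $\mathbb{E}[f(\widehat{W})]=\mathbb{E}[g(W)]\leq\mathbb{E}[g(Z_\lambda)]=\mathbb{E}[f(\widehat{Z_\lambda})]$ by $W\leq_{cx}Z_\lambda$, and since $f$ was an arbitrary convex function this gives $\widehat{W}\leq_{cx}\widehat{Z_\lambda}$. (This closure of the convex order under i.i.d.\ compounding is standard; alternatively one may simply cite \cite{ss07}.)

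Finally, since $\widehat{Z_\lambda}$ is log-concave by assumption and $\widehat{W}\leq_{cx}\widehat{Z_\lambda}$, Lemma~1 of \cite{y09} gives $H(\widehat{W})\leq H(\widehat{Z_\lambda})$, completing the argument; note that taking $f(x)=\pm x$ above already forces $\mathbb{E}[\widehat{W}]=\mathbb{E}[\widehat{Z_\lambda}]$, so no separate matching of means is required. I do not anticipate any serious obstacle: the substantive content sits entirely in Theorem \ref{thm:order2}, and the compounding step, while the one place to be careful, is a routine consequence of discrete convexity.
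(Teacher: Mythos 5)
Your argument is correct and follows essentially the paper's route: the paper likewise derives $W\leq_{cx}Z_\lambda$ from Theorem \ref{thm:order2} and then invokes Theorem 1 of \cite{y09}, which is precisely the combination (closure of $\leq_{cx}$ under i.i.d.\ compounding plus the entropy comparison of Lemma 1 of \cite{y09}) that you reassemble by hand. Your explicit verification that $\Delta^2 g\geq0$ is sound, but it only re-proves the compounding step already packaged in the cited Theorem 1, so nothing further is needed.
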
 
\begin{proof} 
Combine our Theorem \ref{thm:order2} with Theorem 1 of \cite{y09}.
\end{proof}
For discussion of the log-concavity assumption used in this result (including some sufficient conditions for $\widehat{Z_\lambda}$ to be log-concave), we refer 
the reader to Section 3 of \cite{y09} and Section 5 of \cite{jkm13}.  In particular, \cite[Theorem 4]{y09} shows that if $X_1$ is log-concave and
$$
\lambda\mathbb{P}(X_1=1)^2\geq2\mathbb{P}(X_1=2)\,,
$$
then $\widehat{Z_\lambda}$ is log-concave.

Johnson \cite{j07} goes further than establishing that the Poisson distribution maximises entropy within the class of ULC($\infty$) random variables of mean $\lambda$.
In his Theorem 5.1 he shows that for such $W$ the entropy of $W_\alpha$ is a decreasing and concave function of $\alpha$.  Using our stochastic ordering arguments we may also 
generalise this result, and show it applies to $W$ satisfying $W+1\geq_{st}W^\star$.  This is done in Theorem \ref{thm:entropy}.

\begin{thrm}\label{thm:entropy}
Let $W$ be a non-negative, integer-valued random variable satisfying $W+1\geq_{st}W^\star$, where $W^\star$ is defined by (\ref{eq:sbdef2}).  Then
\begin{equation}\label{eq:entthm}
\frac{\partial}{\partial\alpha}H(W_\alpha)\leq0\hspace{20pt}\mbox{ and }\hspace{20pt}\frac{\partial^2}{\partial\alpha^2}H(W_\alpha)\leq0\,,
\end{equation}
with equality if and only if $W$ has a Poisson distribution.
\end{thrm}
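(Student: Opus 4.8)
The plan is to differentiate $H(W_\alpha)$ in $\alpha$ and substitute Lemma~\ref{lem1}, exploiting that the negative dependence hypothesis propagates along the family: as in the proof of Theorem~\ref{thm:order2}, $W^\star\leq_{st}W+1$ forces $W_\alpha^\star\leq_{st}W_\alpha+1$ for every $\alpha\in(0,1]$. Writing $c_i(\alpha)=\mathbb{P}(W_\alpha=i-1)-\mathbb{P}(W_\alpha^\star=i)$, this says exactly that $\sum_{i\geq1}c_i(\alpha)=0$ while all the upper partial sums $\sum_{i\geq m}c_i(\alpha)$ are nonnegative, and this family of tail inequalities is the single analytic input that will replace the ultra-log-concavity used by Johnson~\cite{j07}. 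Throughout one uses $W_1\stackrel{d}{=}W$, $W_0\sim\mathrm{Po}(\lambda)$, and the fact (checked via probability generating functions) that $W_\alpha$ is Poisson for some $\alpha\in(0,1]$ precisely when $W$ is; together with $W_\alpha\leq_{cx}\mathrm{Po}(\lambda)$, which keeps all the relevant moments and entropies finite and justifies the rearrangements, this also handles the equality statements.

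For the first inequality, differentiate $H(W_\alpha)=-\sum_j\mathbb{P}(W_\alpha=j)\log\mathbb{P}(W_\alpha=j)$. Since $\sum_j\partial_\alpha\mathbb{P}(W_\alpha=j)=0$, the constant produced by $\partial_\alpha(p\log p)$ drops out, giving $\partial_\alpha H(W_\alpha)=-\sum_j\bigl(\partial_\alpha\mathbb{P}(W_\alpha=j)\bigr)\log\mathbb{P}(W_\alpha=j)$. Inserting Lemma~\ref{lem1} and summing by parts — the boundary terms vanish because $W_\alpha^\star$ has no atom at $0$ — leaves
\[
\partial_\alpha H(W_\alpha)=\frac{\lambda}{\alpha}\sum_{i\geq1}c_i(\alpha)\log\frac{\mathbb{P}(W_\alpha=i)}{\mathbb{P}(W_\alpha=i-1)}.
\]
Introducing $\rho_\alpha(i-1)=\frac{i\,\mathbb{P}(W_\alpha=i)}{\lambda\,\mathbb{P}(W_\alpha=i-1)}-1$, one has $c_i(\alpha)=-\mathbb{P}(W_\alpha=i-1)\rho_\alpha(i-1)$ and $\log\frac{\mathbb{P}(W_\alpha=i)}{\mathbb{P}(W_\alpha=i-1)}=\log\lambda+\log(1+\rho_\alpha(i-1))-\log i$; since $\sum_{i\geq1}c_i(\alpha)=0$ the $\log\lambda$ term contributes nothing and
\[
\partial_\alpha H(W_\alpha)=-\frac{\lambda}{\alpha}\mathbb{E}\bigl[\rho_\alpha(W_\alpha)\log(1+\rho_\alpha(W_\alpha))\bigr]-\frac{\lambda}{\alpha}\sum_{i\geq1}c_i(\alpha)\log i.
\]
The first term is nonpositive because $x\log(1+x)\geq0$ for $x>-1$. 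For the second, $i\mapsto\log i$ is nondecreasing on $\{1,2,\dots\}$ and $\{c_i(\alpha)\}$ sums to $0$ with nonnegative upper partial sums, so Abel summation gives $\sum_{i\geq1}c_i(\alpha)\log i\geq0$. Hence $\partial_\alpha H(W_\alpha)\leq0$, and equality forces $\rho_\alpha\equiv0$ on the support of $W_\alpha$, i.e. $W_\alpha$ — equivalently $W$ — is Poisson.

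For the concavity, differentiate once more:
\[
\partial_\alpha^2H(W_\alpha)=-\sum_j\bigl(\partial_\alpha^2\mathbb{P}(W_\alpha=j)\bigr)\log\mathbb{P}(W_\alpha=j)-\sum_j\frac{\bigl(\partial_\alpha\mathbb{P}(W_\alpha=j)\bigr)^2}{\mathbb{P}(W_\alpha=j)},
\]
again using $\sum_j\partial_\alpha^2\mathbb{P}(W_\alpha=j)=0$. The second sum is a discrete Fisher-information term and is manifestly nonpositive. For the first sum, differentiate Lemma~\ref{lem1} in $\alpha$ to express $\partial_\alpha^2\mathbb{P}(W_\alpha=j)$ through $\partial_\alpha\mathbb{P}(W_\alpha=\cdot)$, then sum by parts as before; this produces a term of exactly the shape treated above, disposed of in the same way via the inequalities $\sum_{i\geq m}c_i(\alpha)\geq0$, together with the term $-\frac{1}{\alpha}\partial_\alpha H(W_\alpha)$, which is nonnegative and so of the \emph{wrong} sign. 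The crux is to show this offending term is absorbed by the Fisher-information term and the stochastic-ordering term, and for this it is cleanest to pass to the parametrisation $\alpha=e^{-t}$, under which $(W_{e^{-t}})_{t\geq0}$ is the $M/M/\infty$ semigroup started from $W$ with stationary law $\mathrm{Po}(\lambda)$ and concavity of $\alpha\mapsto H(W_\alpha)$ becomes the statement that $t\mapsto e^{t}\frac{d}{dt}H(W_{e^{-t}})$ is nonincreasing; one then combines the de Bruijn-type identity $\partial_t D(W_{e^{-t}}\,\|\,\mathrm{Po}(\lambda))=-\lambda\mathbb{E}[\rho_\alpha(W_\alpha)\log(1+\rho_\alpha(W_\alpha))]$ with the exponential decay of the scaled Fisher information of $W_{e^{-t}}$, feeding the tail inequalities $\sum_{i\geq m}c_i(\alpha)\geq0$ in at the single point where Johnson's treatment of the ultra-log-concave case invoked log-concavity. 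I expect this absorption step — controlling the wrong-sign $-\frac{1}{\alpha}\partial_\alpha H(W_\alpha)$ contribution — to be the main obstacle; the equality case is traced back to $W$ being Poisson exactly as in the first-derivative argument.
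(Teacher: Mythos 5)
Your treatment of the first derivative is essentially sound and, despite the different packaging, is the same argument as the paper's: the two terms you isolate are precisely $-\partial_\alpha D(W_\alpha\lVert Z_\lambda)$ (your $-\tfrac{\lambda}{\alpha}\mathbb{E}[\rho_\alpha\log(1+\rho_\alpha)]$ term, which is Johnson's Lemma 5.2) and $\partial_\alpha\Lambda(W_\alpha)=\tfrac{\lambda}{\alpha}\{\mathbb{E}\log(W_\alpha^\star)-\mathbb{E}\log(W_\alpha+1)\}$ (your $\sum_i c_i(\alpha)\log i$ term, handled by the propagated ordering $W_\alpha^\star\leq_{st}W_\alpha+1$ and monotonicity of $\log$). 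The equality case also comes out correctly from $\rho_\alpha\equiv 0$.

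The concavity half, however, contains a genuine gap, and you have named it yourself: the wrong-sign contribution $-\tfrac{1}{\alpha}\partial_\alpha H(W_\alpha)\geq 0$ that appears when you differentiate Lemma \ref{lem1} in $\alpha$ is never actually absorbed. The appeal to an $M/M/\infty$ reparametrisation, a de Bruijn identity and ``exponential decay of the scaled Fisher information'' is a sketch of a hoped-for cancellation, not a proof, and nothing in the hypothesis $W+1\geq_{st}W^\star$ obviously delivers the required quantitative decay. The paper sidesteps this entirely by never differentiating $H$ directly: it writes $H(W_\alpha)=\Lambda(W_\alpha)-D(W_\alpha\lVert Z_\lambda)$ and treats the two pieces separately. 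Convexity of $\alpha\mapsto D(W_\alpha\lVert Z_\lambda)$ is Johnson's Lemma 5.5, whose only hypothesis is $\mbox{Var}(W)\leq\mathbb{E}[W]$, and that variance inequality is an immediate consequence of $W^\star\leq_{st}W+1$ (it follows from $\mathbb{E}W^\star\leq\mathbb{E}W+1$ via (\ref{eq:sbdef})). Concavity of $\alpha\mapsto\Lambda(W_\alpha)$ then follows from Johnson's Lemma 5.3, which gives
$$
\frac{\partial^2}{\partial\alpha^2}\Lambda(W_\alpha)=\frac{\lambda^2}{\alpha^2}\left\{\mathbb{E}f(W_\alpha^\star)-\mathbb{E}f(W_\alpha+1)\right\}
$$
for an explicit increasing $f$, so the same propagated ordering $W_\alpha^\star\leq_{st}W_\alpha+1$ closes the argument with no cross term to control. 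In the decomposition the problematic $-\tfrac{1}{\alpha}\partial_\alpha H$ term simply never arises. To repair your route you would either have to prove the absorption estimate you postpone, or switch to the $\Lambda-D$ splitting; as written, the second inequality in (\ref{eq:entthm}) is not established.
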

\begin{proof}
Our proof uses many of the same components of that of Theorem 5.1 of \cite{j07}, but replacing the arguments based on ultra log-concavity
with stochastic ordering results.  Following \cite{j07} we decompose the entropy as
$$
H(W_\alpha) = \Lambda(W_\alpha)-D(W_\alpha\lVert Z_\lambda)\,,
$$
where $Z_\lambda\sim\mbox{Po}(\lambda)$,
\begin{eqnarray*}
\Lambda(W_\alpha)&=&-\sum_{j=0}^\infty\mathbb{P}(W_\alpha=j)\log\left(\mathbb{P}(Z_\lambda=j)\right)\,,\\
D(W_\alpha\lVert Z_\lambda)&=&\sum_{j=0}^\infty\mathbb{P}(W_\alpha=j)\log\left(\frac{\mathbb{P}(W_\alpha=j)}{\mathbb{P}(Z_\lambda=j)}\right)\,.
\end{eqnarray*}
Note that $D$ here is the relative entropy.  Lemmas 5.2 and 5.5 of \cite{j07} give us immediately that
$$
\frac{\partial}{\partial\alpha}D(W_\alpha\lVert Z_\lambda)\geq0\hspace{20pt}\mbox{ and }\hspace{20pt}\frac{\partial^2}{\partial\alpha^2}D(W_\alpha\lVert Z_\lambda)\geq0\,,
$$
since for $W$ such that $W+1\geq_{st}W^\star$ we have $\mbox{Var}(W)\leq\mathbb{E}[W]$.

To prove (\ref{eq:entthm}), it remains only to show that $\Lambda(W_\alpha)$ is a decreasing and concave function of $\alpha$.  By equation (15) of \cite{j07}
we have that
\begin{equation}\label{eq:l1}
\frac{\partial}{\partial\alpha}\Lambda(W_\alpha)=\frac{\lambda}{\alpha}\left\{\mathbb{E}\log(W_\alpha^\star)-\mathbb{E}\log(W_\alpha+1)\right\}\,.
\end{equation}
We will see in Section \ref{subsec:orderproof} that $W$ such that $W+1\geq_{st}W^\star$ satisfy the ordering $W_\alpha+1\geq_{st}W_\alpha^\star$ for each $\alpha\in[0,1]$.  Since 
$\log(\cdot)$ is an increasing function, it immediately follows from (\ref{eq:l1}) that $\Lambda(W_\alpha)$ is a decreasing function of $\alpha$.

Similarly, from Lemma 5.3 of \cite{j07} we have that
$$
\frac{\partial^2}{\partial\alpha^2}\Lambda(W_\alpha)=\frac{\lambda^2}{\alpha^2}\left\{\mathbb{E}f(W_\alpha^\star)-\mathbb{E}f(W_\alpha+1)\right\}\,, 
$$
where
$$
f(j)=\frac{j-1}{\lambda}\log\left(\frac{j}{j+1}\right)-\log\left(\frac{j+1}{j}\right)\,.
$$
Since $f(\cdot)$ is an increasing function, we see that $\Lambda(W_\alpha)$ is a concave function of $\alpha$, completing the proof of (\ref{eq:entthm}).

The fact that equality holds in (\ref{eq:entthm}) if and only if $W$ has a Poisson distribution is shown in the same way as the corresponding statement in Theorem
5.1 of \cite{j07}. 
\end{proof}
We have already discussed several examples in which the results of this section may be directly applied.  We conclude with an example where we may use our results even without the negative dependence assumption (\ref{eq:order1}): the lightbulb process.  This model was introduced by Rao et al. \cite{rrz07}, and is motivated by the pharmaceutical problem of a dermal patch designed to target $n$ receptors.  Each receptor is in one of two states.  On each day $r=1,\ldots,n$, the patch causes $r$ uniformly selected receptors to switch state.  

This process has also been studied, for example, by Goldstein and Zhang \cite{gz11}, and Goldstein and Xia \cite{gx12}.  See also references therein.  It is more often described in terms of lightbulbs being switched on and off, with $r$ of the $n$ lightbulbs chosen uniformly to have their state switched at day $r$, for $r=1,\ldots,n$.  For concreteness, we assume that all $n$ bulbs are switched off at the start of the process.  The random variable of interest is $W=W(n)$, the number of bulbs switched on after day $n$. We consider here the problem of bounding the entropy of $W$.

Goldstein and Zhang \cite{gz11} show that (at least for $n$ even) $W^\star\leq_{st}W+2$, but this is not enough to apply our results directly.  Instead, we use the fact, shown by Goldstein and Xia \cite{gx12}, that $W$ is asymptotically distributed as a clubbed binomial distribution.  If we let $X\sim\mbox{Bin}(n-1,1/2)$ have a binomial distribution, then we define the clubbed binomial random variable $Y=Y_m$ by writing
\begin{equation*}
 \mathbb{P}(Y_m=j) = \left\{
\begin{array}{ll}
\mathbb{P}(X=j-1)+\mathbb{P}(X=j) & \mbox{$m$ and $j$ have the same parity,} \\
0 & \mbox{otherwise.} \\
\end{array} \right.  \end{equation*}
That is, the clubbed binomial is formed by combining the mass of the binomial distribution at adjacent integers, so that it is supported on the lattice of non-negative integers with the same parity as $m$.  We note that the support of these clubbed binomial distributions is appropriate to the problem at hand since, as shown by Rao et al. \cite{rrz07}, if $n\equiv 0$ (mod 4) or $n\equiv 3$ (mod 4) then $W$ is supported on the set of even integers at most $n$.  Otherwise, the support of $W$ is the set of odd integers at most $n$.  It what follows, we always choose $m$ in the definition of $Y$ appropriately for the $n$ under consideration. 

We begin with the straightforward observation that $H(Y)\leq H(X)$.  This follows immediately from the definition of $Y$.  Since our binomial distribution $X$ satisfies $X^\star\leq_{st}X+1$, it follows from Corollary \ref{cor:entropy1} that
\begin{equation}\label{eq:light1}
H(Y)\leq H(Z_{(n-1)/2})\,,
\end{equation}
where $Z_\lambda\sim\mbox{Po}(\lambda)$ as usual.  Hence, using (\ref{eq:light1}), we have that
$$
H(W) \leq H(Z_{(n-1)/2}) + \left| H(W)-H(Y) \right|\,.
$$ 
This last term may be bounded using Theorem 17.3.3 of \cite{ct}, which states that if $W$ and $Y$ are random variables supported on a subset of $\mathbb{Z^+}$ of size $k$ and 
$$
\sum_{j\in\mathbb{Z}^+}\left| \mathbb{P}(W=j)-\mathbb{P}(Y=j) \right| \leq \beta \leq \frac{1}{2}\,,
$$
then
$$
\left| H(W)-H(Y) \right| \leq -\beta\log\left(\frac{\beta}{k}\right)\,.
$$
We may apply this result here with the choice
\begin{equation}\label{eq:light2}
\beta = 5.47\sqrt{n}\exp\left(\frac{-(n+1)}{3}\right)\,,
\end{equation}
by Theorem 3.1 of \cite{gx12}, noting that $\beta\leq1/2$ for $n\geq10$.  Since both $W$ and $Y$ are supported on either the even or odd integers up to $n$, we may take $k=(n/2)+1$.  Hence we obtain the following.
\begin{crllr}
Let $W=W(n)$ be the number of bulbs switched on at the terminal time of the lightbulb process.  Then, with $n\geq10$ and $\beta$ given by (\ref{eq:light2}), 
$$ 
H(W) \leq H(Z_{(n-1)/2}) - \beta\log\left(\frac{2\beta}{n+2}\right)\,.
$$
\end{crllr}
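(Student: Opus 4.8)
The plan is to compare $H(W)$ to $H(Z_{(n-1)/2})$ by routing through the clubbed binomial $Y$: on one side we bound the entropy of the approximating law by a Poisson entropy using Corollary~\ref{cor:entropy1}, and on the other we control the gap $|H(W)-H(Y)|$ via a total variation estimate together with a continuity property of entropy. The point of going through $Y$ is that $W$ itself need not satisfy the negative dependence hypothesis (\ref{eq:order1}), whereas the binomial dominating $Y$ does.

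First I would record the elementary inequality $H(Y)\le H(X)$ for $X\sim\mbox{Bin}(n-1,1/2)$: forming $Y$ merges two consecutive atoms $p,q$ of $X$ into a single atom of mass $p+q$, and since $-(p+q)\log(p+q)\le -p\log p-q\log q$ this operation cannot increase entropy. Next, since $X$ is a sum of independent Bernoulli variables it satisfies $X^\star\le_{st}X+1$, so Corollary~\ref{cor:entropy1} gives $H(X)\le H(Z_{(n-1)/2})$, hence $H(Y)\le H(Z_{(n-1)/2})$, which is (\ref{eq:light1}).

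Then I would use the trivial bound $H(W)\le H(Z_{(n-1)/2})+|H(W)-H(Y)|$ and control the last term. For this I would invoke the total variation estimate of Theorem~3.1 of \cite{gx12}, which bounds $\sum_j|\mathbb{P}(W=j)-\mathbb{P}(Y=j)|$ by $\beta=5.47\sqrt{n}\exp(-(n+1)/3)$; a short check shows $\beta\le 1/2$ once $n\ge 10$. Both $W$ and $Y$ are supported on the integers of a fixed parity up to $n$, a set of size $k=(n/2)+1=(n+2)/2$, so Theorem~17.3.3 of \cite{ct} applies and yields $|H(W)-H(Y)|\le -\beta\log(\beta/k)=-\beta\log\bigl(2\beta/(n+2)\bigr)$. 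Substituting into the triangle-type bound gives the claimed inequality.

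The analytic inputs are all quoted, so no new estimate is required; the only genuinely delicate points are bookkeeping ones. One must choose $m$ in the definition of $Y$ so that its support matches the parity of $W$ (determined by $n\bmod 4$, as recorded above from \cite{rrz07}), and one must identify $k=(n+2)/2$ correctly so that the logarithmic factor emerges as $2\beta/(n+2)$ rather than something weaker; I expect this support/parity accounting, rather than any inequality, to be the main thing to get right.
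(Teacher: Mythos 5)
Your proposal is correct and follows essentially the same route as the paper: bounding $H(Y)\leq H(X)\leq H(Z_{(n-1)/2})$ via Corollary \ref{cor:entropy1}, then controlling $|H(W)-H(Y)|$ with the Goldstein--Xia total variation estimate and Theorem 17.3.3 of \cite{ct} with $k=(n+2)/2$. The only difference is that you supply the short justification for $H(Y)\leq H(X)$ that the paper leaves as ``immediate from the definition.''
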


\subsection{Applications to Poisson approximation}\label{subsec:approx}

For further applications of our Theorems \ref{thm:order} and \ref{thm:order2} we turn to some Poisson approximation results.  For use here and in 
Section \ref{sec:mp}, we define the probability metrics we will use.  In this framework, we are inspired by the 
recent work of R\"ollin and Ross \cite{rr12}.  For $1\leq p<\infty$ and $f:\mathbb{Z}^+\mapsto\mathbb{R}$ we let
$$
\lVert f \rVert_p = \left(\sum_{j=0}^\infty |f(j)|^p\right)^{1/p}\,,
$$
and we let $\lVert f \rVert_\infty=\sup_j|f(j)|$.  For distribution functions $F$ and $G$, we then define the distances
$$
d_{n,p}(F,G) = \lVert\Delta^nF-\Delta^nG\rVert_p\,,
$$
for $1\leq p\leq\infty$ and $n\in\mathbb{Z}$.  Many commonly-used probability metrics fit into this framework.  For example,
\begin{itemize}
\item the total variation distance: $d_{TV}(F,G)=\frac{1}{2}d_{1,1}(F,G)$.
\item the Kolmogorov distance: $d_K(F,G)=d_{0,\infty}(F,G)$.
\item the Wasserstein distance: $d_W(F,G)=d_{0,1}(F,G)$.
\item the stop-loss distance: $d_{SL}(F,G)=d_{-1,\infty}(F,G)$.
\end{itemize}
Note also that $d_{1,\infty}$ is a metric useful in proving local limit theorems.

To provide an illustration of the type of Poisson approximation result which may be obtained in our framework, in this section we will consider approximation in the metrics $d_{-k,\infty}$ for 
$k\geq-1$.  The results of Section \ref{sec:mp} below will use some of the other probability metrics we have defined.  In the work of this section, we are motivated by the techniques and results of \cite{dlu02}.

\begin{crllr}\label{cor:order}
Let $W$ be as in Theorem \ref{thm:order}.  If $W$ has distribution function $F$ and $Z_\lambda$ has distribution function $G_\lambda$ then
$$
d_{-k,\infty}(F,G_\lambda) \leq 2^{(s-k-1)_+}\mathbb{E}\left[\binom{Z_\lambda+s+1}{s+1}-\binom{W+s+1}{s+1}\right]\,,
$$
for $k=-1,\ldots,s+1$.
\end{crllr}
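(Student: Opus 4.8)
\medskip
\noindent\textbf{Proof plan.} By Theorem~\ref{thm:order}, $W\leq_{(s+1)-cx}Z_\lambda$. Since $m\mapsto\binom{m}{i}\in\mathcal{F}_{s+1}$ for every $i$, this gives $\mathbb{E}\binom{W}{i}\leq\mathbb{E}\binom{Z_\lambda}{i}$ for all $i\geq1$; in particular $W$ has finite moments up to order $s+1$, so $\beta:=\mathbb{E}\big[\binom{Z_\lambda+s+1}{s+1}-\binom{W+s+1}{s+1}\big]$ is finite, and $\beta\geq0$ because $m\mapsto\binom{m+s+1}{s+1}\in\mathcal{F}_{s+1}$. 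The plan is to prove the estimate with constant $1$ directly for the three boundary cases $k\in\{s-1,s,s+1\}$, and to obtain $k\leq s-2$ by iterating $d_{-k,\infty}(F,G_\lambda)\leq2\,d_{-(k+1),\infty}(F,G_\lambda)$, which follows from $\Delta^{-k}(F-G_\lambda)(j)=\Delta^{-(k+1)}(F-G_\lambda)(j+1)-\Delta^{-(k+1)}(F-G_\lambda)(j)$ and the triangle inequality; iterated from $k=s-1$ downwards, this multiplies the bound by exactly $2^{(s-1)-k}=2^{(s-k-1)_+}$.

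The key computation is the identity, valid for $k\geq1$ and $j\in\mathbb{Z}^+$,
\[
\Delta^{-k}F(j)-\Delta^{-k}G_\lambda(j)=(-1)^{k+1}\big[\mathbb{E}\psi_{j,k}(W)-\mathbb{E}\psi_{j,k}(Z_\lambda)\big],\qquad \psi_{j,k}(m):=\binom{m-j-1+k}{k},
\]
which I would derive by writing $\Delta^{-k}F(j)=(-1)^k\sum_{i\geq j}\binom{i-j+k-1}{k-1}F(i)$, substituting $F(i)-G_\lambda(i)=-\sum_{m>i}(\mathbb{P}(W=m)-\mathbb{P}(Z_\lambda=m))$, interchanging the two summations (legitimate by the moment bounds above), and collapsing the inner sum via a hockey-stick identity. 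A direct computation of forward differences shows $\psi_{j,k}\in\mathcal{F}_{k+1}$ for every $j$, hence $\psi_{j,k}\in\mathcal{F}_{s+1}$ whenever $k\geq s$. For such $k$, $W\leq_{(s+1)-cx}Z_\lambda$ forces $\mathbb{E}\psi_{j,k}(W)\leq\mathbb{E}\psi_{j,k}(Z_\lambda)$, so $\Delta^{-k}F-\Delta^{-k}G_\lambda$ has the $j$-independent sign $(-1)^k$ and $|\Delta^{-k}F(j)-\Delta^{-k}G_\lambda(j)|=\mathbb{E}\psi_{j,k}(Z_\lambda)-\mathbb{E}\psi_{j,k}(W)$.

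For $k=s+1$ I would verify that $\psi_{0,s+1}-\psi_{j,s+1}\in\mathcal{F}_{s+1}$ for all $j$ (its $i$-th forward difference is $\binom{m+s}{s+1-i}-\binom{m-j+s}{s+1-i}\geq0$ for $1\leq i\leq s+1$), so the supremum over $j$ of $\mathbb{E}\psi_{j,s+1}(Z_\lambda)-\mathbb{E}\psi_{j,s+1}(W)$ is attained at $j=0$ and equals $\mathbb{E}\binom{Z_\lambda+s}{s+1}-\mathbb{E}\binom{W+s}{s+1}$. For $k=s$ the comparison $\psi_{0,s}-\psi_{j,s}\in\mathcal{F}_{s+1}$ just fails (its $(s+1)$-st difference can be negative), so I would instead use the hockey-stick identity $\sum_{j\geq0}\psi_{j,s}(m)=\binom{m+s}{s+1}$ and the nonnegativity of $\mathbb{E}\psi_{j,s}(Z_\lambda)-\mathbb{E}\psi_{j,s}(W)$ to bound $\sup_j|\Delta^{-s}F(j)-\Delta^{-s}G_\lambda(j)|$ by $\sum_{j\geq0}\big(\mathbb{E}\psi_{j,s}(Z_\lambda)-\mathbb{E}\psi_{j,s}(W)\big)=\mathbb{E}\binom{Z_\lambda+s}{s+1}-\mathbb{E}\binom{W+s}{s+1}$. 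Both of these are at most $\beta$, since $\binom{m+s+1}{s+1}-\binom{m+s}{s+1}=\binom{m+s}{s}\in\mathcal{F}_{s+1}$. Finally, for $k=s-1$, $\psi_{j,s}\in\mathcal{F}_{s+1}$ makes $v(j):=|\Delta^{-s}F(j)-\Delta^{-s}G_\lambda(j)|=\mathbb{E}\psi_{j,s}(Z_\lambda)-\mathbb{E}\psi_{j,s}(W)$ nonnegative with $v(j)\to0$, and since $\Delta^{-s}F-\Delta^{-s}G_\lambda$ has constant sign we get $\Delta^{-(s-1)}(F-G_\lambda)(j)=\pm(v(j+1)-v(j))$, whence $|\Delta^{-(s-1)}F(j)-\Delta^{-(s-1)}G_\lambda(j)|\leq\sup_i v(i)=d_{-s,\infty}(F,G_\lambda)\leq\beta$.

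Assembling the three boundary cases with the iteration $d_{-k,\infty}\leq2\,d_{-(k+1),\infty}$ started from $d_{-(s-1),\infty}(F,G_\lambda)\leq\beta$ gives $d_{-k,\infty}(F,G_\lambda)\leq2^{(s-k-1)_+}\beta$ for all $k\in\{-1,\ldots,s+1\}$, as required. I expect the real work to be concentrated in the three boundary cases: the spanning functions $\psi_{j,k}$ belong to $\mathcal{F}_{s+1}$ only for $k\geq s$, the natural ``extremise at $j=0$'' argument already breaks down at $k=s$, and one must check with care that the summation identity and the constant-sign argument for $\Delta^{-s}$ each return a bound no larger than $\beta$ and that the telescoping loses no more than the claimed power of $2$.
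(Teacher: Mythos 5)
Your argument is correct, and it supplies in full the content that the paper compresses into a single sentence: the published proof simply invokes Theorem~\ref{thm:order} together with ``an argument analogous to that for Corollary 3.14 of \cite{dlu02}'', and that cited argument is precisely the conversion of the $(s+1)$-convex ordering into bounds on the metrics $d_{-k,\infty}$ that you reconstruct from scratch. Your representation $\Delta^{-k}(F-G_\lambda)(j)=(-1)^{k+1}\mathbb{E}\left[\psi_{j,k}(W)-\psi_{j,k}(Z_\lambda)\right]$ with $\psi_{j,k}(m)=\binom{m-j-1+k}{k}$ is, up to a shift in $j$ and a sign, the family $h_{k+1}(\cdot,j)$ of (\ref{eq:hdef}) that the paper itself uses in Section~\ref{subsec:orderproof}; your three boundary cases $k\in\{s-1,s,s+1\}$ (each bounded by $\beta$ with constant $1$) together with the doubling step $d_{-k,\infty}\leq 2\,d_{-(k+1),\infty}$ reproduce exactly the constant $2^{(s-k-1)_+}$, and at $s=1$ your bound collapses to the paper's display (\ref{eq:hyp1}), which is a good consistency check. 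The delicate points are all handled correctly: finiteness of $\mathbb{E}\binom{W}{s+1}$ (needed to interchange the two sums and to make $\beta$ well defined) does follow from $W\leq_{(s+1)-cx}Z_\lambda$ since $m\mapsto\binom{m}{i}$ lies in $\mathcal{F}_{s+1}$; the monotonicity of $a\mapsto\binom{a}{r}$ under the paper's convention $\binom{a}{b}=0$ for $b>a$ is exactly what makes $\psi_{0,s+1}-\psi_{j,s+1}\in\mathcal{F}_{s+1}$; and your observation that the ``extremise at $j=0$'' argument fails at $k=s$, forcing the hockey-stick summation $\sum_{j\geq0}\psi_{j,s}(m)=\binom{m+s}{s+1}$ instead, is accurate. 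What your write-up buys over the paper's is self-containedness (no appeal to \cite{dlu02}); what the paper's citation buys is brevity and the assurance that the constants match a previously published general comparison of degree-$k$ stop-loss-type distances.
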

\begin{proof} 
The result follows from Theorem \ref{thm:order} and an argument analogous to that for Corollary 3.14 of \cite{dlu02}.
\end{proof}
If we take $s=1$ in Corollary \ref{cor:order} we obtain that, for any $W$ with $W+1\geq_{st}W^\star$,
\begin{equation}\label{eq:hyp1}
d_{-k,\infty}(F,G_\lambda) \leq 2^{(-k)_+-1}\left(\lambda-\mbox{Var}(W)\right)\,,
\end{equation}
for $k\in\{-1,0,1,2\}$ (and hence including bounds on the stop-loss, Kolmogorov and local limit distances).  This applies in particular if $W$ may be written as 
a sum of totally negatively dependent Bernoulli random variables, as in the examples discussed previously.

We conclude this section with a short example to illustrate this result.

\begin{example}\label{eg:h1}
\emph{Suppose we distribute $m$ balls uniformly among $N>1$ urns, where each urn has the capacity for up to one ball.  Let $W$ count the number of the first $n$ urns that are occupied.  Then $W$ has a hypergeometric distribution with mean $\lambda=mn/N$ and variance
$$
\frac{mn}{N}\left(\frac{N-n}{N-1}\right)\left(1-\frac{m}{N}\right)\,.
$$
As noted earlier, $W$ may be written as a sum of negatively related Bernoulli random variables and so satisfies $W+1\geq_{st}W^\star$.  The bound (\ref{eq:hyp1}) then gives
$$
d_{-k,\infty}(F,G_\lambda)\leq2^{(-k)_+-1}\frac{mn}{N}\left(\frac{(m+n)N-mn-N}{N(N-1)}\right)\,,
$$
for $k\in\{-1,0,1,2\}$, where $F$ is the distribution function of $W$ and $G_\lambda$ the distribution function of a Poisson random variable with mean $\lambda$.}

\emph{We note that upper bounds of a better order may be available.  For example, let $k=0$ (so that we consider the Kolmogorov distance) and suppose that $m$ and $n$ are both of order $O(N)$.  Then our upper bound is also of order $O(N)$, but an upper bound of better order $O(1)$ is available from Theorem 6.A of Barbour et al. \cite{bhj92}.  However, our results have the advantage of dealing simultaneously with a range of probability metrics.}
\end{example}

\subsection{Applications to concentration inequalities}

In this section we note that the convex ordering of Theorem \ref{thm:order2} implies a concentration inequality for $W$.
\begin{crllr}\label{cor:conc}
Let $W$ be a non-negative, integer-valued random variable such that $\mathbb{E}W=\lambda$ and $W+1\geq_{st}W^\star$.  Let $t>0$.  Then
\begin{eqnarray*}
\mathbb{P}(W\geq\lambda+t) &\leq& e^t\left(1+\frac{t}{\lambda}\right)^{-(t+\lambda)}\,,\\
\mathbb{P}(W\leq\lambda-t) &\leq& e^{-t}\left(1-\frac{t}{\lambda}\right)^{t-\lambda}\,,
\end{eqnarray*}
where the latter bound applies if $t<\lambda$.
\end{crllr}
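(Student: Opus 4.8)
The plan is to derive both inequalities from the convex ordering $W\leq_{cx}Z_\lambda$ supplied by Theorem \ref{thm:order2}, combined with the usual exponential (Chernoff) bounding technique. The key observation is that for every $\theta\in\mathbb{R}$ the map $x\mapsto e^{\theta x}$ is convex, so that $W\leq_{cx}Z_\lambda$ immediately gives
$$
\mathbb{E}\left[e^{\theta W}\right]\leq\mathbb{E}\left[e^{\theta Z_\lambda}\right]=\exp\left(\lambda(e^\theta-1)\right)\,,
$$
the moment generating function of the Poisson law; in particular this expectation is finite for all $\theta$.

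For the upper tail I would fix $\theta>0$, apply Markov's inequality to $e^{\theta W}$ to obtain $\mathbb{P}(W\geq\lambda+t)\leq e^{-\theta(\lambda+t)}\exp(\lambda(e^\theta-1))$, and then minimise the exponent $\lambda(e^\theta-1)-\theta(\lambda+t)$ over $\theta>0$. The minimiser is $\theta=\log(1+t/\lambda)$, and substituting it gives exactly $e^t(1+t/\lambda)^{-(t+\lambda)}$. For the lower tail the same steps with $\theta<0$ (equivalently, applying Markov's inequality to $e^{-uW}$ with $u>0$) yield $\mathbb{P}(W\leq\lambda-t)\leq e^{u(\lambda-t)}\exp(\lambda(e^{-u}-1))$; minimising $\lambda(e^{-u}-1)+u(\lambda-t)$ over $u>0$ leads to the minimiser $u=-\log(1-t/\lambda)$, which is positive precisely when $t<\lambda$, and substitution produces $e^{-t}(1-t/\lambda)^{t-\lambda}$.

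The computation is entirely routine once the convex ordering is in hand, so I do not anticipate a genuine obstacle. The one point worth flagging is that one really needs the full convex ordering $\leq_{cx}$ rather than merely the increasing convex ordering $\leq_{icx}$: for the lower-tail bound the relevant test function $x\mapsto e^{-ux}$ is convex but \emph{decreasing}, so an increasing convex comparison would not suffice. This is exactly what Theorem \ref{thm:order2} delivers, however, since there $\mathbb{E}[W]=\mathbb{E}[Z_\lambda]=\lambda$ upgrades $W\leq_{icx}Z_\lambda$ to $W\leq_{cx}Z_\lambda$.
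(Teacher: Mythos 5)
Your proof is correct and follows essentially the same route as the paper: Markov's inequality applied to $e^{\theta W}$, the bound $\mathbb{E}e^{\theta W}\leq\exp\{\lambda(e^\theta-1)\}$ obtained from the convex ordering of Theorem \ref{thm:order2}, and optimisation over $\theta$. Your remark that the lower-tail bound genuinely requires the full ordering $\leq_{cx}$ (because $x\mapsto e^{-ux}$ is convex but decreasing) is a worthwhile clarification of a point the paper leaves implicit in the phrase ``the proof of the second inequality is similar.''
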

\begin{proof}
To prove the first inequality, let $\theta>0$ and note that (by a standard argument using Markov's inequality)
$$
\mathbb{P}(W-\lambda\geq t)\leq \exp\left\{-\theta(t+\lambda)\right\}\mathbb{E}e^{\theta W}\,.
$$
Now, for $\theta>0$, the function $e^{\theta x}$ is convex in $x$ and hence we apply Theorem \ref{thm:order2} to note that
$$
\mathbb{E}e^{\theta W} \leq \exp\left\{\lambda\left(e^\theta-1\right)\right\}\,.
$$
We then minimize the resulting bound over $\theta$.  The proof of the second inequality is similar.
\end{proof}
These inequalities have also been found in recent work by Arratia and Baxendale \cite[Theorem 4.2]{ab13}, who show they perform well compared to other such concentration inequalities which are available.

\subsection{Proofs of Theorems \ref{thm:order} and \ref{thm:order2}} \label{subsec:orderproof}

We now give the proofs of Theorems \ref{thm:order} and \ref{thm:order2}.  We begin with some properties of the $s$-convex orderings.  Our Lemmas \ref{lem:order1}--\ref{lem:order3}
will make use of results established by Denuit and Lef\`evre \cite{dl97} and Denuit et al. \cite{dlu99}.  In particular, we will need closure of the $s$-convex orderings under operations such 
as convolution and taking mixtures.  

\begin{lmm}\label{lem:order1}
Let $X$ and $Y$ be non-negative, integer-valued random variables.  If $X\leq_{s-cx}Y$ for some $s\in\mathbb{N}$ then $T_\alpha X\leq_{s-cx}T_\alpha Y$ for all
$\alpha\in[0,1]$.
\end{lmm}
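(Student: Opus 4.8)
The plan is to unwind the definition of the $s$-convex order and to reduce the statement to the fact that the thinning operation maps the test-function class $\mathcal{F}_s$ into itself. Let $S_n=\eta_1+\cdots+\eta_n$ denote a sum of $n$ independent Bernoulli($\alpha$) random variables, so that, conditionally on $X=n$, the thinned variable $T_\alpha X$ has the law of $S_n$ (and likewise for $Y$). For a test function $g\in\mathcal{F}_s$ put $h(n)=\mathbb{E}[g(S_n)]$; this is well defined since $S_n$ is bounded, and by conditioning on $X$ (using the independence of the $\eta_i$ from $X$) we obtain $\mathbb{E}[g(T_\alpha X)]=\mathbb{E}[h(X)]$ and $\mathbb{E}[g(T_\alpha Y)]=\mathbb{E}[h(Y)]$. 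Hence it suffices to show that $h\in\mathcal{F}_s$: granting this, $X\leq_{s-cx}Y$ immediately yields $\mathbb{E}[h(X)]\leq\mathbb{E}[h(Y)]$, i.e.\ $\mathbb{E}[g(T_\alpha X)]\leq\mathbb{E}[g(T_\alpha Y)]$ for every $g\in\mathcal{F}_s$, which is exactly $T_\alpha X\leq_{s-cx}T_\alpha Y$. The degenerate cases $\alpha=0$ (where $T_0$ sends everything to the point mass at $0$) and $\alpha=1$ (where $T_1$ is the identity) are covered by the same computation.

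The key step is to establish, by induction on $i$, the operator identity
$$
\Delta^i h(n)=\alpha^i\,\mathbb{E}\left[(\Delta^i g)(S_n)\right]\,,\qquad i\geq0\,,\ n\in\mathbb{Z}^+\,.
$$
The base case $i=1$ follows by writing $S_{n+1}=S_n+\eta_{n+1}$ with $\eta_{n+1}$ an independent Bernoulli($\alpha$) variable and conditioning on its value:
$$
h(n+1)-h(n)=\alpha\bigl(\mathbb{E}[g(S_n+1)]-\mathbb{E}[g(S_n)]\bigr)=\alpha\,\mathbb{E}\left[(\Delta g)(S_n)\right]\,,
$$
the interchange of $\Delta$ with the expectation being legitimate because $S_n$ is bounded. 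Applying this computation with $g$ replaced by $\Delta^{i-1}g$ and invoking the induction hypothesis gives the identity for general $i$. Since $g\in\mathcal{F}_s$ means $\Delta^i g\geq0$ for $i=1,\ldots,s$, the identity forces $\Delta^i h(n)\geq0$ for $i=1,\ldots,s$ and all $n$, so $h\in\mathcal{F}_s$, completing the argument.

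I do not expect a genuine obstacle here: the proof rests on the one-line observation that the finite-difference operator $\Delta$ commutes with the thinning map up to the scalar factor $\alpha$, together with an elementary conditioning step. The only mild technical points are checking that the relevant expectations are well defined, which is immediate since each $S_n$ takes finitely many values, and that the conditional law of $T_\alpha X$ given $X=n$ is indeed that of $S_n$, which is immediate from the definition of $T_\alpha$. As an alternative one could deduce the result from the closure of the $s$-convex orders under mixtures and convolutions recorded in Denuit and Lef\`evre \cite{dl97} and Denuit et al.\ \cite{dlu99}, but the direct computation above is short and self-contained.
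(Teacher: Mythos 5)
Your proof is correct. The paper itself disposes of this lemma in one line, by citing Property 4.6 of Denuit, Lef\`evre and Utev \cite{dlu99} together with an argument analogous to Theorem 8.A.13 of Shaked and Shanthikumar \cite{ss07}; the underlying mechanism there is exactly the one you use, namely conditioning on the number of summands and checking that the compounding function $h(n)=\mathbb{E}[g(S_n)]$ stays in the test class. What you add is the explicit verification that $h\in\mathcal{F}_s$, via the identity $\Delta^i h(n)=\alpha^i\,\mathbb{E}[(\Delta^i g)(S_n)]$, whose base case and induction step are both correct (the interchange of $\Delta$ with the expectation is harmless since each $S_n$ is bounded). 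So this is not a genuinely different route so much as a self-contained, elementary expansion of the route the paper delegates to references: your version buys independence from \cite{dlu99} and \cite{ss07} at the cost of a short induction, and it also isolates the clean structural fact that thinning intertwines with the forward difference up to the factor $\alpha$, which is the same commutation phenomenon the paper exploits elsewhere (e.g.\ in Lemma \ref{lem:order2}, where $T_\alpha$ and $V$ are said to commute). Your closing remark correctly identifies the paper's actual proof as the alternative.
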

\begin{proof}
To see this, use Property 4.6 of \cite{dlu99} and a proof analogous to that of Theorem 8.A.13 of \cite{ss07}.
\end{proof}
\begin{lmm}\label{lem:order2}
Let $W$ be a non-negative, integer-valued random variable with positive mean.  If we have $W^\star\leq_{s-cx}W+1$ for some $s\in\mathbb{N}$ then 
$(T_\alpha W)^\star\leq_{s-cx}T_\alpha W+1$ for all $\alpha\in[0,1]$.
\end{lmm}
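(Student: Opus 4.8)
The plan is to reduce the statement to Lemma~\ref{lem:order1} via a distributional identity relating size biasing and thinning. The key observation is that, for $\alpha\in(0,1]$,
$$
(T_\alpha W)^\star \ \stackrel{d}{=}\ 1+T_\alpha(W^\star-1)\,,
$$
where $W^\star-1$ is a genuine non-negative integer-valued random variable since $\mathbb{P}(W^\star=0)=0$. I would establish this identity by a direct computation with mass functions: writing out $\mathbb{P}((T_\alpha W)^\star=j)$ using (\ref{eq:sbdef2}) and the fact that $T_\alpha W$, conditionally on $W=w$, is $\mbox{Bin}(w,\alpha)$, then using the elementary identity $j\binom{w}{j}=w\binom{w-1}{j-1}$ together with the definition (\ref{eq:sbdef2}) of $W^\star$ to recognise the resulting sum as $\mathbb{P}(1+T_\alpha(W^\star-1)=j)$. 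Equivalently, one may argue that the size-biased version of $\mbox{Bin}(w,\alpha)$ is $1+\mbox{Bin}(w-1,\alpha)$, and that size biasing a mixture reweights the mixing distribution in proportion to the component means, which here turns the law of $W$ into that of $W^\star$.

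Granting the identity, the remainder is bookkeeping with the closure properties of the $s$-convex order. From the hypothesis $W^\star\leq_{s-cx}W+1$ and the translation invariance of the $s$-convex order (see \cite{dl97,lu96}), using once more that $W^\star\geq1$ almost surely, I would deduce $W^\star-1\leq_{s-cx}W$. Applying Lemma~\ref{lem:order1} with the pair $(W^\star-1,W)$ gives $T_\alpha(W^\star-1)\leq_{s-cx}T_\alpha W$, and adding the constant $1$ to both sides (closure under convolution with a unit point mass, cf. \cite{dlu99}) yields $1+T_\alpha(W^\star-1)\leq_{s-cx}T_\alpha W+1$. By the identity above, the left-hand side is $(T_\alpha W)^\star$, which is exactly the claim. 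The case $\alpha=0$ may be excluded or treated trivially, since $T_0W\equiv0$ has zero mean and no size-biased version.

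The main obstacle is the identity $(T_\alpha W)^\star\stackrel{d}{=}1+T_\alpha(W^\star-1)$ itself; once it is in hand, the proof is essentially immediate. A secondary point requiring care is the step $W^\star\leq_{s-cx}W+1\Rightarrow W^\star-1\leq_{s-cx}W$: this is valid because the extremal test functions generating $\mathcal{F}_s$ are, modulo a harmless affine term, translation covariant (for instance, for $s=2$ the order is characterised by the ordering of all stop-loss transforms $t\mapsto\mathbb{E}(X-t)_+$), so I would simply cite this property rather than reprove it.
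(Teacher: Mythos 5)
Your proof is correct and follows essentially the same route as the paper's: the identity $(T_\alpha W)^\star \stackrel{d}{=} 1+T_\alpha(W^\star-1)$ is precisely the commutation of the thinning operator $T_\alpha$ with the operator $V:W\mapsto W^\star-1$ that the paper invokes (and likewise leaves as an easy check), and the remaining steps --- shifting the hypothesis down by one, applying Lemma~\ref{lem:order1}, and adding back the unit point mass --- are identical. The only difference is that you spell out the verification of the identity and flag the translation step explicitly, both of which the paper treats as routine.
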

\begin{proof}
Using Lemma \ref{lem:order1} and the closure of the $s$-convex orders under convolution \cite[Proposition 3.7]{dl97} , we have that
$$
T_\alpha W+1 \geq_{s-cx} T_\alpha(W^\star-1)+1 = T_\alpha(VW)+1\,,
$$
where the operator $V$ is defined by $VW=W^\star-1$.  Since the operators $T_\alpha$ and $V$ commute (as can be easily checked) we obtain
$$
T_\alpha W+1 \geq_{s-cx} V(T_\alpha W)+1 = (T_\alpha W)^\star\,,
$$
as required.
\end{proof}
\begin{lmm}\label{lem:order3}
Let $X_1$ and $X_2$ be independent non-negative, integer-valued random variables with positive mean.  If $X_1^\star\leq_{s-cx}X_1+1$ and $X_2^\star\leq_{s-cx}X_2+1$ 
for some $s\in\mathbb{N}$ then $(X_1+X_2)^\star\leq_{s-cx}X_1+X_2+1$.
\end{lmm}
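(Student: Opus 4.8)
The plan is to exploit the standard mixture representation of the size-biased distribution of a sum of independent random variables, together with the closure properties of the $s$-convex order already invoked in the proofs of Lemmas \ref{lem:order1}--\ref{lem:order2}. Write $\lambda_i = \mathbb{E}[X_i] > 0$ for $i = 1,2$. First I would record the elementary identity that, for any $g:\mathbb{Z}^+\mapsto\mathbb{R}$ for which the relevant expectations exist,
$$
(\lambda_1+\lambda_2)\mathbb{E}[g((X_1+X_2)^\star)] = \mathbb{E}[(X_1+X_2)g(X_1+X_2)] = \lambda_1\mathbb{E}[g(X_1^\star+X_2)] + \lambda_2\mathbb{E}[g(X_1+X_2^\star)]\,,
$$
where in each of the two terms on the right the size-biased variable is taken independent of the other summand; this follows by conditioning on one coordinate and applying the defining relation (\ref{eq:sbdef}) to the other. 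Consequently $(X_1+X_2)^\star$ is distributed as the mixture of $X_1^\star+X_2$ and $X_1+X_2^\star$ with weights $\lambda_1/(\lambda_1+\lambda_2)$ and $\lambda_2/(\lambda_1+\lambda_2)$ respectively.

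Next I would bound each mixture component. Since $X_1^\star\leq_{s-cx}X_1+1$ and trivially $X_2\leq_{s-cx}X_2$, closure of the $s$-convex order under convolution \cite[Proposition 3.7]{dl97} gives $X_1^\star+X_2\leq_{s-cx}X_1+X_2+1$; symmetrically $X_1+X_2^\star\leq_{s-cx}X_1+X_2+1$. Finally, since both components are dominated in the $s$-convex order by the \emph{same} random variable $X_1+X_2+1$, the mixture is too: for any $f\in\mathcal{F}_s$,
$$
\mathbb{E}[f((X_1+X_2)^\star)] = \tfrac{\lambda_1}{\lambda_1+\lambda_2}\mathbb{E}[f(X_1^\star+X_2)] + \tfrac{\lambda_2}{\lambda_1+\lambda_2}\mathbb{E}[f(X_1+X_2^\star)] \leq \mathbb{E}[f(X_1+X_2+1)]\,.
$$
Alternatively one may phrase this last step as an instance of closure of the $s$-convex order under mixtures \cite{dlu99}. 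This establishes $(X_1+X_2)^\star\leq_{s-cx}X_1+X_2+1$.

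The argument is essentially routine once the mixture representation is in hand, so there is no serious obstacle; the one point requiring a little care is verifying that in the decomposition of $\mathbb{E}[(X_1+X_2)^\star\text{-expectations}]$ the size-biased summand genuinely ends up independent of the untouched summand, which is where independence of $X_1$ and $X_2$ is used. Everything else is a direct appeal to the convolution and mixture closure properties cited above.
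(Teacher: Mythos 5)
Your proposal is correct and follows essentially the same route as the paper: the paper also writes $(X_1+X_2)^\star$ as the mixture $X_1+X_2-X_I+X_I^\star$ with $\mathbb{P}(I=1)=\lambda_1/(\lambda_1+\lambda_2)$ (citing \cite[Corollary 2.1]{cgs12} rather than deriving the identity directly), bounds each conditional component via convolution closure \cite[Proposition 3.7]{dl97}, and then removes the conditioning by mixture closure. The only difference is that you verify the mixture representation from the defining relation (\ref{eq:sbdef}) rather than citing it, which is a harmless substitution.
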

\begin{proof} 
We firstly note that $(X_1+X_2)^\star=X_1+X_2-X_I+X_I^\star$, where the random index $I\in\{1,2\}$ is chosen independently of all else and such that
$$
\mathbb{P}(I=1)=1-\mathbb{P}(I=2)=\frac{\mathbb{E}X_1}{\mathbb{E}X_1+\mathbb{E}X_2}\,.
$$
See \cite[Corollary 2.1]{cgs12}, for example.  Conditioning on the event that $I=1$, we have
$$
(X_1+X_2)^\star = X_1^\star+X_2 \leq_{s-cx}X_1+X_2+1\,,
$$
by assumption and using Proposition 3.7 of \cite{dl97}.  An analogous argument holds if we condition instead on the event that $I=2$.  To complete the proof we remove 
the conditioning using Proposition 3.7 of \cite{dl97}.
\end{proof}
We are now in a position to give the proof of Theorem \ref{thm:order}.  Noting that Poisson random variables trivially satisfy the ordering (\ref{eq:order1}) for 
all $s\in\mathbb{N}$, Lemmas \ref{lem:order2} 
and \ref{lem:order3} may be combined to give us that for $W$ satisfying the assumptions of our theorem, 
\begin{equation}\label{eq:order2}
W_\alpha^\star\leq_{s-cx}W_\alpha+1\,,
\end{equation}
for all $\alpha\in[0,1]$. 

Now, following Lef\`evre and Utev \cite{lu96}, we let $h_0(X,j)=\mathbb{P}(X=j)$ for any non-negative, integer-valued random variable $X$ and $j\in\mathbb{Z}^+$.  We define 
$h_k(X,j)$ for $k\geq1$ by letting 
\begin{equation}\label{eq:hdef}
h_k(X,j)=-\Delta^{-1}h_{k-1}(X,j)=\sum_{i=j}^\infty h_{k-1}(X,i)=\mathbb{E}\binom{X-j+k-1}{k-1}\,.
\end{equation}
By Proposition 2.5 of \cite{lu96}, to prove that $W\leq_{(s+1)-cx}Z_\lambda$, we need to show that
\begin{equation}\label{eq:lu1}
 \mathbb{E}\binom{W}{k}\leq\mathbb{E}\binom{Z_\lambda}{k}\,,\hspace{20pt}k=1,\ldots,s\,,
\end{equation}
and that
\begin{equation}\label{eq:lu2}
 h_{s+1}(W,j)\leq h_{s+1}(Z_\lambda,j)\,,\hspace{20pt}j\geq s+1\,.
\end{equation}
Beginning with (\ref{eq:lu1}), the inequality with $k=1$ is trivial, since $\mathbb{E}Z_\lambda=\lambda$.  In the case $k=2$, it is straightforward to show, 
using (\ref{eq:sbdef}), that if
$\mathbb{E}W^\star\leq\mathbb{E}W+1$ (which holds by the assumption that $W^\star\leq_{s-cx}W+1$) then $\mathbb{E}\binom{W}{2}\leq\mathbb{E}\binom{Z_\lambda}{2}$.  
The remaining cases, $k=3,\ldots,s$ are covered explicitly in the statement of Theorem \ref{thm:order}.

It remains only to establish (\ref{eq:lu2}).  Lemma \ref{lem1} gives us that
$$
\frac{\partial}{\partial\alpha}h_0(W_\alpha,j) = \frac{\lambda}{\alpha}\Delta\left[h_0(W_\alpha+1,j)-h_0(W_\alpha^\star,j)\right]\,.
$$  
Applying $\Delta^{-(s+1)}$ to each side of this equation (and interchanging summation and differentiation) we obtain
$$
-\frac{\partial}{\partial\alpha}h_{s+1}(W_\alpha,j) = \frac{\lambda}{\alpha}\left[h_s(W_\alpha+1,j)-h_s(W_\alpha^\star,j)\right]\,.
$$
By the stochastic ordering (\ref{eq:order2}) and Proposition 2.5 of \cite{lu96}, $h_s(W_\alpha+1,j)\geq h_s(W_\alpha^\star,j)$ for all $\alpha$ and $j$.  Hence, letting 
$j\in\mathbb{Z}^+$,
\begin{eqnarray*}
0&\leq&\int_0^1\frac{\lambda}{\alpha}\left[h_s(W_\alpha+1,j)-h_s(W_\alpha^\star,j)\right]\,d\alpha\\ &=& -\int_0^1\frac{\partial}{\partial\alpha}h_{s+1}(W_\alpha,j)\,d\alpha\\&=&h_{s+1}(Z_\lambda,j)-h_{s+1}(W,j)\,,
\end{eqnarray*}
as required, since $W_1$ is equal in distribution to $W$ and $W_0\sim\mbox{Po}(\lambda)$. This establishes our Theorem  \ref{thm:order}.

The proof of Theorem \ref{thm:order2} is exactly as for Theorem \ref{thm:order} above (with $s=1$), except for a change in the limits of integration.

\subsection{Remarks on some related results}

We conclude Section \ref{sec:order} by noting some results related to Theorem \ref{thm:order2}.  Before stating these, we need a
definition.  We recall that random variables $\{X_i:i\in\Gamma\}$ are negatively associated if
$$
\mbox{Cov}\left(f(X_i,i\in\Gamma_1),g(X_i,i\in\Gamma_2)\right) \leq0\,,
$$
for all increasing functions $f$ and $g$ and all $\Gamma_1,\Gamma_2\subseteq\Gamma$ with $\Gamma_1\cap\Gamma_2=\emptyset$.  Negative association is closely related to other concepts
of negative dependence we have used.  For example, note that negatively associated indicator random variables are negatively related, and hence sums of negatively associated indicator variables satisfy our stochastic ordering assumption (\ref{eq:order1}) with $s=1$.

Shao \cite{s00} shows that if $X_1,\ldots,X_n$ are negatively associated and if the random variables $X^\dagger_1,\ldots,X^\dagger_n$ are 
independent with each of the $X^\dagger_i$ having the same marginal distribution as $X_i$ then
\begin{equation}\label{eq:shao}
X_1+\cdots+X_n\leq_{cx}X^\dagger_1+\cdots+X^\dagger_n\,.
\end{equation}
In the case where $X_1,\ldots,X_n$ are indicator random variables, the stochastic comparison (\ref{eq:shao}) with the sum of independent random variables is 
a stronger result than our Theorem \ref{thm:order2}, in which the comparison is with a Poisson variable.  We note, though, that our results apply in a more general
negative dependence setting, and that we obtain results for the more general $s$-convex orderings (as in our Theorem \ref{thm:order}).

Results analogous to (\ref{eq:shao}) are also available in a positive dependence setting.  Recall that random variables $\{X_i:i\in\Gamma\}$ are associated if
$$
\mbox{Cov}\left(f(X_i,i\in\Gamma),g(X_i,i\in\Gamma)\right) \geq0\,,
$$
for all increasing functions $f$ and $g$.  Denuit et al. \cite{ddr01} show that if $X_i,\ldots,X_n$ are associated then
$$
X_1+\cdots+X_n\geq_{cx}X^\dagger_1+\cdots+X^\dagger_n\,.
$$
In the course of this work, we have been unable to find results in a positive dependence setting, such as for sums of associated random variables.

\section{Further results in Poisson approximation} \label{sec:mp}

In Section \ref{subsec:approx} we saw how our negative dependence assumption leads to bounds in Poisson approximation for our random variable $W$.  These bounds were established using the convex ordering given by Theorem \ref{thm:order}, which was itself proved using Lemma \ref{lem1}.  We use this section to give another application of thinning and size biasing (via our Lemma \ref{lem1}) to Poisson approximation. 

We state a bound in Lemma 
\ref{lem2} below which will be applied (in Section \ref{subsec:wass}) to give some general results in Poisson approximation which do not need any assumptions of stochastic ordering.  
We will note, however, the refinements and simplifications available in these results if we introduce the same stochastic ordering assumptions which we used in Section \ref{sec:order}.  

In Section \ref{subsec:mp} we will apply Lemma \ref{lem2} to the problem of Poisson approximation of the mixed Poisson distribution. 

Our results will be stated in terms of the distances $d_{n,p}$ defined in Section \ref{subsec:approx}.

\begin{lmm} \label{lem2}
Let $W$ be a non-negative, integer-valued random variable with distribution function $F$ and $\mathbb{E}[W]=\lambda>0$.  Let $G_\lambda$ be the distribution function 
of $Z_\lambda\sim\mbox{Po}(\lambda)$.  Then for $1\leq p\leq\infty$ and $n\in\mathbb{Z}$
$$
d_{n,p}(F,G_\lambda) \leq \lambda\int_0^1\frac{1}{\alpha}d_{n+1,p}(F_\alpha^{(1)},F_\alpha^\star)\,d\alpha\,,
$$ 
where $F_\alpha^{(1)}$ is the distribution function of $W_\alpha+1$ and $F_\alpha^\star$ is the distribution function of $W_\alpha^\star$. 
\end{lmm}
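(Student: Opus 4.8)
The plan is to integrate the identity of Lemma \ref{lem1} over $\alpha\in[0,1]$ and then pass to the $\lVert\cdot\rVert_p$-norms, in the spirit of the proof of Theorem \ref{thm:order}. Write $F_\alpha$ for the distribution function of $W_\alpha$; since $W_1$ has the law of $W$ and $W_0\sim\mbox{Po}(\lambda)$, we have $F_1=F$ and $F_0=G_\lambda$. Writing $p_\alpha$, $q_\alpha$, $r_\alpha$ for the mass functions of $W_\alpha$, $W_\alpha+1$, $W_\alpha^\star$, so that $\Delta F_\alpha=p_\alpha$, $\Delta F_\alpha^{(1)}=q_\alpha$ and $\Delta F_\alpha^\star=r_\alpha$, Lemma \ref{lem1} reads $\frac{\partial}{\partial\alpha}p_\alpha=\frac{\lambda}{\alpha}\Delta(q_\alpha-r_\alpha)$. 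Applying $\Delta^{n-1}$ to both sides and interchanging it with $\frac{\partial}{\partial\alpha}$ gives
$$
\frac{\partial}{\partial\alpha}\Delta^n F_\alpha(j)=\frac{\lambda}{\alpha}\Delta^{n+1}\left(F_\alpha^{(1)}-F_\alpha^\star\right)(j)\,,
$$
for each $j\in\mathbb{Z}^+$ and $\alpha\in(0,1]$.

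I would then integrate this over $\alpha\in(0,1)$. By the fundamental theorem of calculus together with $F_1=F$ and $F_0=G_\lambda$,
$$
\Delta^n F(j)-\Delta^n G_\lambda(j)=\int_0^1\frac{\lambda}{\alpha}\Delta^{n+1}\left(F_\alpha^{(1)}-F_\alpha^\star\right)(j)\,d\alpha\,.
$$
Taking $\lVert\cdot\rVert_p$ over $j$ and invoking Minkowski's integral inequality (valid for all $1\le p\le\infty$) then yields
$$
d_{n,p}(F,G_\lambda)=\left\lVert\Delta^n F-\Delta^n G_\lambda\right\rVert_p\leq\int_0^1\frac{\lambda}{\alpha}\left\lVert\Delta^{n+1}\left(F_\alpha^{(1)}-F_\alpha^\star\right)\right\rVert_p\,d\alpha=\lambda\int_0^1\frac{1}{\alpha}\,d_{n+1,p}\left(F_\alpha^{(1)},F_\alpha^\star\right)\,d\alpha\,,
$$
which is the claim. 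If the right-hand side is infinite (in particular if $\Delta^n F$ itself fails to be finite because $W$ lacks the requisite moments) there is nothing to prove, so we may assume all quantities involved are finite.

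The substance of the argument lies in justifying the two interchanges. For $n\leq0$ the operator $\Delta^{n-1}$ carries the infinite summation defining $\Delta^{-1}$, so commuting it past $\frac{\partial}{\partial\alpha}$ (and, at the integration step, past $\int_0^1$) amounts to term-by-term differentiation and integration of a series; this is routine by dominated convergence once one notes that $W_\alpha=T_\alpha W+Z_{(1-\alpha)\lambda}$ is stochastically dominated, uniformly in $\alpha\in[\varepsilon,1]$, by $W$ plus an independent $Z_\lambda$, which controls the relevant tails (this regularity is in any case already implicit in \cite{j07}). I expect the main obstacle to be the remaining bookkeeping: the integrand has a $1/\alpha$ singularity at the left endpoint, so one should run the identity on $[\varepsilon,1]$, where $\alpha\mapsto\Delta^n F_\alpha(j)$ is continuously differentiable, and then let $\varepsilon\downarrow0$, using that $\Delta^n F_\alpha(j)\to\Delta^n G_\lambda(j)$ as $\alpha\downarrow0$; the only thing that can fail is convergence of the limiting integral, in which case the asserted bound is vacuous. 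This is of the same character as the corresponding endpoint step in the proof of Theorem \ref{thm:order}.
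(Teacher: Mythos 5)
Your proposal is correct and follows essentially the same route as the paper: both write $F-G_\lambda$ as the integral of $\frac{\partial}{\partial\alpha}F_\alpha$ over $[0,1]$, convert the integrand via Lemma \ref{lem1} into $\frac{\lambda}{\alpha}\Delta^{n+1}(F_\alpha^{(1)}-F_\alpha^\star)$, and conclude with Minkowski's integral inequality. The only difference is cosmetic (you apply Lemma \ref{lem1} before integrating rather than after, and you spell out the interchange and endpoint justifications that the paper leaves implicit).
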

\begin{proof} 
Let $F_\alpha$ be the distribution function of $W_\alpha$.  We use the definition of $d_{n,p}$ and note that $F=F_1$ and
$G_\lambda=F_0$ to obtain
\begin{eqnarray*}
d_{n,p}(F,G_\lambda) &=& \left\lVert\Delta^n\int_0^1\frac{\partial}{\partial\alpha}F_\alpha\,d\alpha \right\rVert_p\\
&\leq& \int_0^1 \left\lVert\Delta^n\frac{\partial}{\partial\alpha}F_\alpha\right\rVert_p\,d\alpha\\
&=&\lambda\int_0^1\frac{1}{\alpha}\left\lVert\Delta^{n+1}F_\alpha^{(1)}-\Delta^{n+1}F_\alpha^\star\right\rVert_p\,d\alpha\,,
\end{eqnarray*}
where the inequality follows from Minkowski's integral inequality \cite[Appendix A]{x} and the final line uses Lemma \ref{lem1}.
\end{proof}

\subsection{Poisson approximation using thinning and size biasing}\label{subsec:wass}

The main result of this section is Theorem \ref{thm:pois} below.  This contains some Poisson approximation results derived from Lemma \ref{lem2} and also shows how these results may be combined with the same stochastic ordering assumption employed in Section \ref{sec:order}. 

To ease the notational burden on this section we will write $d_{n,p}(X,Y)$ to mean $d_{n,p}(F,G)$ if $X$ and $Y$ are random variables with distribution functions $F$ and $G$, respectively. 

\begin{thrm}\label{thm:pois}
Let $W$ be a non-negative, integer-valued random variable with $\mathbb{E}[W]=\lambda>0$ and let $W^\star$ be defined by (\ref{eq:sbdef2}).  Let $Z_\lambda\sim\mbox{Po}(\lambda)$.  
\begin{enumerate}
\renewcommand{\theenumi}{\alph{enumi}}
\renewcommand{\labelenumi}{(\alph{enumi})}
\item\label{th32_1} For $s\in\mathbb{Z}^+$
\begin{eqnarray}
\label{eq:31}d_{-s,1}(W,Z_\lambda)&\leq&\frac{\lambda}{1+s}d_{1-s,1}(W,W^\star-1)\,,\\
\label{eq:32}d_{-s,\infty}(W,Z_\lambda)&\leq&\frac{\lambda}{s}d_{1-s,\infty}(W,W^\star-1)\,,
\end{eqnarray}
where this last inequality applies if $s\not=0$.
\item\label{th32_2} If, in addition, $W+1\geq_{s-cx}W^\star$ then
\begin{eqnarray}
\label{eq:33}d_{-s,1}(W,Z_\lambda)&\leq&\frac{1}{1+s}\mathbb{E}\left[\lambda\binom{W+s}{s}-W\binom{W+s-1}{s}\right]\,,\\
\nonumber d_{-k,\infty}(W,Z_\lambda)&\leq&\frac{2^{(s-k-1)_+}}{k}\mathbb{E}\left[\lambda\binom{W+s}{s}-W\binom{W+s-1}{s}\right]\,,\\\label{eq:34}
\end{eqnarray}
for $k=1,\ldots,s+1$.
\end{enumerate}
\end{thrm}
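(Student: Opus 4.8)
The plan is to derive everything from Lemma~\ref{lem2}. Setting $n=-s$ there gives, for $p\in\{1,\infty\}$,
$$
d_{-s,p}(W,Z_\lambda)\leq\lambda\int_0^1\frac{1}{\alpha}\,d_{1-s,p}(W_\alpha+1,W_\alpha^\star)\,d\alpha ,
$$
so part~(\ref{th32_1}) reduces to showing that $d_{1-s,1}(W_\alpha+1,W_\alpha^\star)\leq\alpha^{s+1}\,d_{1-s,1}(W,W^\star-1)$ and $d_{1-s,\infty}(W_\alpha+1,W_\alpha^\star)\leq\alpha^{s}\,d_{1-s,\infty}(W,W^\star-1)$, since the integrals $\lambda\int_0^1\alpha^{s}\,d\alpha$ and $\lambda\int_0^1\alpha^{s-1}\,d\alpha$ then produce the constants $\lambda/(s+1)$ and $\lambda/s$. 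To bound $d_{1-s,p}(W_\alpha+1,W_\alpha^\star)$ I would first decompose $W_\alpha^\star$. Since $W_\alpha=T_\alpha W+Z_{(1-\alpha)\lambda}$, the size-biasing-of-a-sum identity from the proof of Lemma~\ref{lem:order3} shows that $W_\alpha^\star$ equals in distribution $(T_\alpha W)^\star+Z_{(1-\alpha)\lambda}=T_\alpha(W^\star-1)+1+Z_{(1-\alpha)\lambda}$ with probability $\alpha$ (using $(T_\alpha W)^\star=T_\alpha(W^\star-1)+1$ from the proof of Lemma~\ref{lem:order2}, and $Z_{(1-\alpha)\lambda}^\star=Z_{(1-\alpha)\lambda}+1$) and $W_\alpha+1$ with probability $1-\alpha$. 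This extracts a factor $\alpha$, and because $W_\alpha$ and $T_\alpha(W^\star-1)+Z_{(1-\alpha)\lambda}$ share the same independent Poisson summand, the remaining difference of distribution functions is a convolution of the $\mbox{Po}((1-\alpha)\lambda)$ mass function with the corresponding difference for the thinned variables; Young's inequality (convolution with a probability mass function is an $\lVert\cdot\rVert_p$-contraction, and $\Delta$ commutes with convolution) then yields
$$
d_{1-s,p}(W_\alpha+1,W_\alpha^\star)\leq\alpha\,d_{1-s,p}\bigl(T_\alpha W,\,T_\alpha(W^\star-1)\bigr) .
$$

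The remaining---and main---obstacle is a quantitative contraction property of thinning, namely that for integers $s\geq1$
$$
d_{1-s,1}(T_\alpha X,T_\alpha Y)\leq\alpha^{s}\,d_{1-s,1}(X,Y),\qquad d_{1-s,\infty}(T_\alpha X,T_\alpha Y)\leq\alpha^{s-1}\,d_{1-s,\infty}(X,Y) ,
$$
the case $s=0$ of the first being just the fact that thinning cannot increase total variation distance. I would prove this via the representation $d_{1-s,p}(X,Y)=\lVert h_s(X,\cdot)-h_s(Y,\cdot)\rVert_p$, where $h_s(X,j)=\mathbb{E}\binom{X-j+s-1}{s-1}$ is as in (\ref{eq:hdef}). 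Writing $\mathbb{E}[g(T_\alpha X)]=\mathbb{E}[\widetilde g_\alpha(X)]$ with $\widetilde g_\alpha(k)=\mathbb{E}[g(B_{k,\alpha})]$ for $B_{k,\alpha}$ a Binomial$(k,\alpha)$ variable, one has the intertwining relation $\Delta\widetilde g_\alpha=\alpha\,\widetilde{(\Delta g)}_\alpha$ together with the elementary identity $\Delta_k\,\mathbb{P}(B_{k,\alpha}\geq j)=\alpha\,\mathbb{P}(B_{k,\alpha}=j-1)$. Iterating summation by parts $s$ times---which is precisely where the power $\alpha^{s}$ is produced---gives an identity of the form
$$
h_s(T_\alpha X,j)-h_s(T_\alpha Y,j)=\alpha^{s}\sum_{l\geq0}\bigl(h_s(X,l+s)-h_s(Y,l+s)\bigr)\,\mathbb{P}(B_{l,\alpha}=j-1)
$$
(up to an index shift that is immaterial for the norms). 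Taking $\lVert\cdot\rVert_1$ and using $\sum_j\mathbb{P}(B_{l,\alpha}=j-1)=1$ gives the first inequality; taking $\lVert\cdot\rVert_\infty$ and using $\sum_l\mathbb{P}(B_{l,\alpha}=j-1)=\alpha^{-1}$ gives the second. Combining with the previous display and integrating proves part~(\ref{th32_1}).

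For part~(\ref{th32_2}), assume in addition $W^\star\leq_{s-cx}W+1$. As in the proof of Theorem~\ref{thm:order}, since $x\mapsto\binom{x-j+r-1}{r-1}$ lies in $\mathcal{F}_s$ whenever $r\geq s$, the ordering gives $h_r(W^\star-1,j)=h_r(W^\star,j+1)\leq h_r(W+1,j+1)=h_r(W,j)$, so $g^{(r)}(j):=h_r(W,j)-h_r(W^\star-1,j)\geq0$ for all $j$ and all $r\geq s$. For (\ref{eq:33}): non-negativity of $g^{(s)}$ collapses the $\lVert\cdot\rVert_1$ into a telescoping sum, $d_{1-s,1}(W,W^\star-1)\leq\sum_{j\geq0}g^{(s)}(j)=g^{(s+1)}(0)=h_{s+1}(W,0)-h_{s+1}(W^\star-1,0)$, which by (\ref{eq:sbdef}) equals $\tfrac1\lambda\,\mathbb{E}\bigl[\lambda\binom{W+s}{s}-W\binom{W+s-1}{s}\bigr]$; substituting into (\ref{eq:31}) gives (\ref{eq:33}). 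For (\ref{eq:34}): applying (\ref{eq:32}) with $s$ replaced by $k$ gives $d_{-k,\infty}(W,Z_\lambda)\leq\tfrac\lambda k\sup_j|g^{(k)}(j)|$, and one bounds $\sup_j|g^{(k)}(j)|\leq 2^{(s-k-1)_+}\,g^{(s+1)}(0)$ by the descent argument behind Corollary~\ref{cor:order} (cf.\ Corollary~3.14 of~\cite{dlu02}): $g^{(s+1)}$ is non-negative and non-increasing so its supremum is $g^{(s+1)}(0)$; $\sup_j g^{(s)}(j)\leq\sum_j g^{(s)}(j)=g^{(s+1)}(0)$; $\sup_j|g^{(s-1)}(j)|\leq\sup_j g^{(s)}(j)$ because $g^{(s-1)}(j)=g^{(s)}(j)-g^{(s)}(j+1)$ with $g^{(s)}\geq0$; and each further decrease of $k$ costs at most a factor $2$. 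This yields (\ref{eq:34}). Throughout, the one slightly delicate point is the bookkeeping of index shifts in $h_s$ and in the summation-by-parts identity, none of which affects the $\lVert\cdot\rVert_p$ under consideration.
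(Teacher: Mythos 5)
Your proof follows the paper's own argument essentially step for step: Lemma \ref{lem2} with $n=-s$, the mixture decomposition of $W_\alpha^\star$ (size-biasing a sum of independent variables) to extract a factor of $\alpha$, the commutation identity $(T_\alpha W)^\star = T_\alpha(W^\star-1)+1$ together with the fact that convolution with the common Poisson component is a contraction, the thinning contraction estimates $d_{1-s,1}(T_\alpha X,T_\alpha Y)\leq\alpha^{s}d_{1-s,1}(X,Y)$ and $d_{1-s,\infty}(T_\alpha X,T_\alpha Y)\leq\alpha^{s-1}d_{1-s,\infty}(X,Y)$, and, for part (b), the identification of $d_{1-s,1}(W,W^\star-1)$ via (\ref{eq:sbdef}) and the $2^{(s-k-1)_+}$ descent bound under the $s$-convex ordering. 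The only difference is one of presentation: you prove from scratch the two ingredients the paper imports by citation (the thinning contraction, credited there to Proposition 4.2 of \cite{dv01}, and the sup-norm descent bound, credited to Corollary 3.14 of \cite{dlu02}), and your sketches of both are sound.
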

\begin{proof}
We begin by using Corollary 2.1 of \cite{cgs12}, and the fact that $Z_{\lambda}^\star$ is equal in distribution to $Z_\lambda+1$ for all $\lambda$, to note that 
\begin{equation}\label{eq:sb}
W_\alpha^\star=(T_\alpha W+Z_{(1-\alpha)\lambda})^\star=I_\alpha(T_\alpha W)^\star+(1-I_\alpha)(T_\alpha W+1)+Z_{(1-\alpha)\lambda}\,,
\end{equation}
where $I_\alpha$ is independent of all else and $\mathbb{P}(I_\alpha=1)=\alpha=1-\mathbb{P}(I_\alpha=0)$.

Using the functions $h_s(X,j)$ defined by (\ref{eq:hdef}) for any non-negative random variable $X$ and $j\in\mathbb{Z}^+$, we have that for $s\in\mathbb{Z}^+$,
$$
d_{-s,p}(W_\alpha+1,W_\alpha^\star)=\lVert h_{s+1}(W_\alpha+1,\cdot)-h_{s+1}(W_\alpha^\star,\cdot)\rVert_p\,.
$$ 
With $W_\alpha^\star$ given by (\ref{eq:sb}), we can condition on $I_\alpha$ and $Z_{(1-\alpha)\lambda}$ to get that
$$
d_{-s,p}(W_\alpha+1,W_\alpha^\star)\leq\alpha d_{-s,p}(T_\alpha W,V(T_\alpha W))\,,
$$
where the operator $V$ is such that $VX=X^\star-1$ for any non-negative random variable $X$.  Since the operators $V$ and $T_\alpha$ commute for any $0\leq\alpha\leq1$, we have that
\begin{equation}\label{eq:dist}
d_{-s,p}(W_\alpha+1,W_\alpha^\star)\leq\alpha d_{-s,p}(T_\alpha W,T_\alpha (W^\star-1))\,.
\end{equation}
Recalling that $T_\alpha W=\sum_{j=1}^W\eta_j$, where $\eta_1,\eta_2,\ldots$ are iid Bernoulli variables with mean $\alpha$,  in the case that $p\in\{1,\infty\}$ we may bound this latter distance using an argument analogous to that of Proposition 4.2 of Denuit and Van Bellegem \cite{dv01} to get that
\begin{eqnarray}
\label{eq:31_1}d_{-s,1}(T_\alpha W,T_\alpha (W^\star-1))&\leq&\alpha^{s+1}d_{-s,1}(W,W^\star-1) \,,\\
\label{eq:31_2}d_{-s,\infty}(T_\alpha W,T_\alpha (W^\star-1))&\leq&\alpha^{s}d_{-s,\infty}(W,W^\star-1) \,,
\end{eqnarray}
We may now complete the proof of the first part of the theorem.  Combining Lemma \ref{lem2} with (\ref{eq:dist}) and (\ref{eq:31_1}) we have that
$$
d_{-s,1}(W,Z_\lambda)\leq\lambda d_{1-s,1}(W,W^\star-1)\int_0^1\alpha^s\,d\alpha=\frac{\lambda}{1+s}d_{1-s,1}(W,W^\star-1)\,.
$$ 
Similarly, using (\ref{eq:31_2}) in place of (\ref{eq:31_1}), we have that if $s\not=0$
$$
d_{-s,\infty}(W,Z_\lambda)\leq\lambda d_{1-s,\infty}(W,W^\star-1)\int_0^1\alpha^{s-1}\,d\alpha=\frac{\lambda}{s}d_{1-s,\infty}(W,W^\star-1)\,.
$$
This completes the proof of part (\ref{th32_1}).

For part (\ref{th32_2}), we note that if $W+1\geq_{s-cx}W^\star$ then 
$$
d_{1-s,1}(W,W^\star-1)=\mathbb{E}\left[\binom{W+s}{s}-\binom{W^\star-1+s}{s}\right]\,.
$$  
Combining this with (\ref{eq:31}) and (\ref{eq:sbdef}) gives us (\ref{eq:33}).

Now let $k\in\{1,\ldots,s+1\}$.  Corollary 3.14 of \cite{dlu02} gives us that if $W+1\geq_{s-cx}W^\star$ then 
$$
d_{1-k,\infty}(W,W^\star-1)\leq2^{(s-k-1)_+}\mathbb{E}\left[\binom{W+s}{s}-\binom{W^\star-1+s}{s}\right]\,.
$$   
We obtain (\ref{eq:34}) when we combine this with (\ref{eq:32}) and (\ref{eq:sbdef}).
\end{proof}
To illustrate this result, we consider two examples.
\begin{example}\label{eg:h2}
\emph{Firstly, we return to the setting of Example \ref{eg:h1} and let $W$ have a hypergeometric distribution (with notation as in Example \ref{eg:h1}).  Then, letting
$$
\epsilon=\frac{mn}{N}\left(\frac{(m+n)N-mn-N}{N(N-1)}\right)\,,
$$
and recalling that $W+1\geq_{st}W^\star$ in this case, Theorem \ref{thm:pois}(\ref{th32_2}) gives $d_{-1,1}(W,Z_\lambda)\leq\epsilon/2$, $d_{-2,\infty}(W,Z_\lambda)\leq\epsilon/2$ and $d_{-1,\infty}(W,Z_\lambda)\leq\epsilon$, where this latter metric is the stop-loss distance.}
\end{example}
\begin{example}\label{eg:p1}
\emph{We consider now the P\'olya distribution, which has found applications in the study of epidemics, genetics and communications.  Suppose we have an urn containing $N$ balls, of which $r$ are red and $N-r$ are black.  At each step, we draw a ball, note its colour and return it to the urn together with $c\geq1$ additional balls of the same colour.  Repeat this for a total of $m$ draws, and let $W$ count the number of red balls chosen in these $m$ draws.  Then $W$ has a P\'olya distribution with mean $\lambda=mr/N$ and variance given by
$$
\sigma^2=\frac{mr(N+cm)(N-r)}{N^2(N+c)}\,.
$$
We use Theorem \ref{thm:pois}(\ref{th32_1}) to give a bound on the Wasserstein distance $d_W(W,Z_\lambda)=d_{0,1}(W,Z_\lambda)$.  From that result, we have that
$$
d_W(W,Z_\lambda)\leq2\lambda d_{TV}(W,W^\star-1)\leq2\lambda\left\{d_{TV}(W,W^\star)+d_{TV}(W,W+1)\right\}\,,
$$
where the final bound is the triangle inequality.  From inequalities (5) and (20) of \cite{d11}, respectively, we have that $d_{TV}(W,W^\star)\leq\sigma/2\lambda$ and
$$
d_{TV}(W,W+1)\leq\frac{1}{2\lambda(N-r)\sqrt{N+c}}\left\{\sqrt{mr(N-r)(N+cm)}+m\sqrt{cr(N-r)}\right\}\,.
$$
Combining these inequalities, we have the following explicit bound:
\begin{multline}\label{eq:polya}
d_W(W,Z_\lambda)\leq\sqrt{\frac{mr(N+cm)(N-r)}{N^2(N+c)}}\\+\frac{1}{(N-r)\sqrt{N+c}}\left\{\sqrt{mr(N-r)(N+cm)}+m\sqrt{cr(N-r)}\right\}\,.
\end{multline}
Some further discussion of the P\'olya distribution, and the bound (\ref{eq:polya}), is given in Example \ref{eg:p2} below.}
\end{example}
We note that the results of Theorem \ref{thm:pois} are not the only way in which our stochastic ordering assumption can be used to get a Poisson approximation result based on Lemma \ref{lem2}.  For example, consider the Wasserstein distance $d_W=d_{0,1}$ and total variation distance $d_{TV}=\frac{1}{2}d_{1,1}$.  An argument analogous to that used to obtain (\ref{eq:dist}) gives us that
$$
d_{TV}(W_\alpha+1,W_\alpha^\star)\leq\alpha d_{TV}(T_\alpha W,T_\alpha(W^\star-1))\,.
$$  
Combining this with Lemma \ref{lem2} (in the case $n=0$) we have that
$$
d_W(W,Z_\lambda)\leq2\lambda\int_0^1d_{TV}(T_\alpha W,T_\alpha(W^\star-1))\,d\alpha\,.
$$
If we assume that $W+1\geq_{st}W^\star$, we may use Theorem 7 of \cite{rp03} to obtain
$$
d_W(W,Z_\lambda)\leq2\lambda\int_0^1\alpha\mathbb{E}\left[W+1-W^\star\right]\,d\alpha=\lambda-\mbox{Var}(W)\,.
$$
In this case, however, better bounds are available by combining Proposition 2 of \cite{dlu12} with Theorem 1.1 of \cite{bx06}.  We thus obtain
$$
d_W(\mathcal{L}(W),\mathcal{L}(Z_\lambda))\leq\left(1\wedge\frac{1.15}{\sqrt{\lambda}}\right)\left(\lambda-\mbox{Var}(W)\right)\,.
$$

\subsection{Poisson approximation for mixed Poisson distributions}\label{subsec:mp}

In this section we apply Lemma \ref{lem2} to the case where $W\sim\mbox{Po}(\xi)$ has a mixed Poisson distribution with positive mixture distribution $\xi$ and 
$\mathbb{E}[\xi]=\lambda$.  We begin by showing that in this case, $W_\alpha$ also has a mixed Poisson distribution.  Note that we will not make any assumptions of stochastic or convex ordering in this section.

\begin{lmm}\label{lem3}
If $W\sim\mbox{Po}(\xi)$ then $W_\alpha\sim\mbox{Po}(\alpha\xi+(1-\alpha)\lambda)$ for all $\alpha\in[0,1]$.
\end{lmm}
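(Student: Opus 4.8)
The plan is to verify the claimed identity directly, by computing the distribution of $U_\alpha W$ from the definition \eqref{eq:udefn} and checking it agrees with $\mathrm{Po}(\alpha\xi + (1-\alpha)\lambda)$. The key observation is that thinning and mixing interact cleanly with the Poisson family: if $N \sim \mathrm{Po}(\mu)$, then $T_\alpha N \sim \mathrm{Po}(\alpha\mu)$ (a standard fact, since a Poisson number of points each retained independently with probability $\alpha$ is again Poisson). First I would condition on the mixture variable $\xi$. Given $\xi = t$, the variable $W$ is $\mathrm{Po}(t)$, so by the thinning property $T_\alpha W \mid \{\xi = t\} \sim \mathrm{Po}(\alpha t)$. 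Since the extra Poisson summand $Z_{(1-\alpha)\lambda}$ in \eqref{eq:udefn} is independent of everything (in particular of $\xi$), the conditional distribution of $U_\alpha W = T_\alpha W + Z_{(1-\alpha)\lambda}$ given $\{\xi = t\}$ is the convolution $\mathrm{Po}(\alpha t) * \mathrm{Po}((1-\alpha)\lambda) = \mathrm{Po}(\alpha t + (1-\alpha)\lambda)$, using additivity of independent Poissons.

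Having done this, I would remove the conditioning: unconditionally, $W_\alpha$ is a mixture over $t \sim \xi$ of the laws $\mathrm{Po}(\alpha t + (1-\alpha)\lambda)$, which is precisely the mixed Poisson distribution $\mathrm{Po}(\alpha\xi + (1-\alpha)\lambda)$ — note $\alpha\xi + (1-\alpha)\lambda$ is a positive random variable whenever $\xi$ is, and its mean is $\alpha\lambda + (1-\alpha)\lambda = \lambda$, consistent with the earlier remark that $\mathbb{E}[W_\alpha] = \lambda$. This completes the argument. One could alternatively phrase the whole thing via probability generating functions: $\mathbb{E}[x^{W_\alpha} \mid \xi] = \mathbb{E}[x^{T_\alpha W} \mid \xi]\, \mathbb{E}[x^{Z_{(1-\alpha)\lambda}}]$, and $\mathbb{E}[x^{T_\alpha W}\mid\xi] = \mathbb{E}[(1-\alpha+\alpha x)^{W} \mid \xi] = \exp\{\xi(\alpha x - \alpha)\}$ since $W\mid\xi$ is Poisson, so that $\mathbb{E}[x^{W_\alpha}\mid\xi] = \exp\{(\alpha\xi + (1-\alpha)\lambda)(x-1)\}$, which is the pgf of $\mathrm{Po}(\alpha\xi + (1-\alpha)\lambda)$ conditionally on $\xi$; taking expectations over $\xi$ identifies $W_\alpha$ as the asserted mixed Poisson.

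There is no real obstacle here: the statement is essentially a bookkeeping exercise combining three elementary and well-known stability properties of the Poisson family (thinning, convolution, and mixing), and the only thing to be careful about is the independence of $Z_{(1-\alpha)\lambda}$ from $\xi$, which is built into the definition \eqref{eq:udefn}. I would present the pgf computation as the cleanest route, since it handles the conditioning and the convolution in one line each, and then invoke uniqueness of generating functions to conclude.
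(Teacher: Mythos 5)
Your proposal is correct and follows essentially the same route as the paper: both arguments reduce to showing $T_\alpha W\sim\mbox{Po}(\alpha\xi)$ and then convolving with the independent $Z_{(1-\alpha)\lambda}$, the only difference being that the paper verifies the first step by an explicit mass-function computation while you condition on $\xi$ and invoke the standard thinning/convolution stability of the Poisson family (or the equivalent pgf identity).
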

\begin{proof}
Elementary computations show that for $j\in\mathbb{Z}^+$
$$
\mathbb{P}(T_\alpha W=j) = \sum_{i=0}^\infty\binom{i}{j}\alpha^j(1-\alpha)^{i-j}\mathbb{P}(W=i) = \frac{1}{j!}\mathbb{E}\left[e^{-\alpha\xi}(\alpha\xi)^j\right]\,,
$$
so that $T_\alpha W\sim\mbox{Po}(\alpha\xi)$.  Since $W_\alpha$ is the convolution of $T_\alpha W$  and an independent Poisson random variable, the result follows.   
\end{proof}
Now, let us write $\xi_{(\alpha)}=\alpha\xi+(1-\alpha)\lambda$ and
$$
g_\alpha(j) = \frac{\exp\{-{\xi_{(\alpha)}}\}\xi_{(\alpha)}^{j-1}}{(j-1)!}\,.
$$
Since
$$
\mathbb{P}(W_\alpha+1=j)-\mathbb{P}(W_\alpha^\star=j)=\mathbb{E}\left[\left(1-\frac{\xi_{(\alpha)}}{\lambda}\right)g_\alpha(j)\right]\,,
$$
Lemmas \ref{lem2} and \ref{lem3} give us that
\begin{eqnarray*}
d_{n,p}(F,G_\lambda) &\leq& \int_0^1\frac{1}{\alpha}\left\lVert\Delta^n\mathbb{E}\left[(\xi_{(\alpha)}-\lambda)g_\alpha\right]\right\rVert_p\,d\alpha\\
&\leq& \int_0^1\frac{1}{\alpha}\mathbb{E}\left[|\xi_{(\alpha)}-\lambda|\left\lVert\Delta^ng_{\alpha}\right\rVert_p\right]\,d\alpha\,,
\end{eqnarray*}
where we have again used Minkowski's integral inequality.

For the remainder of this section we focus only on the case $n\geq0$ and $p=1$.  In this case, straightforward calculations using Lemma 3.4 of \cite{rr12} give us that
$$
\left\lVert\Delta^ng_{\alpha}\right\rVert_1 \leq \xi_{(\alpha)}^{-n/2}\,,
$$
and hence
\begin{equation}\label{eq:mp1}
d_{n,1}(F,G_\lambda) \leq \int_0^1\mathbb{E}\left[|\xi-\lambda|\left(\alpha\xi+(1-\alpha)\lambda\right)^{-n/2}\right]\,d\alpha\,.
\end{equation}
If we assume that the expectation in (\ref{eq:mp1}) exists for all $\alpha\in[0,1]$ then we may interchange the order of integration to obtain the following result.

\begin{thrm}\label{thm:mp}
Let $W\sim\mbox{Po}(\xi)$ for some positive random variable $\xi$ with $\mathbb{E}[\xi]=\lambda$.  Let $F$ be the distribution function of $W$ and $G_\lambda$ be 
the distribution function of $Z_\lambda\sim\mbox{Po}(\lambda)$.  Suppose that
$$
\mathbb{E}\left[|\xi-\lambda|\left(\alpha\xi+(1-\alpha)\lambda\right)^{-n/2}\right]<\infty\,,
$$
for some $n\in\mathbb{Z}^+$ and all $\alpha\in[0,1]$.  Then if $n\not=2$,
$$
d_{n,1}(F,G_\lambda) \leq \left|\frac{2}{n-2}\right|\mathbb{E}\left|\xi^{(2-n)/2}-\lambda^{(2-n)/2}\right|\,,
$$
while if $n=2$, $d_{2,1}(F,G_\lambda)\leq\mathbb{E}\left|\log\left(\frac{\xi}{\lambda}\right)\right|$.
\end{thrm}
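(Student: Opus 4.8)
The plan is to start from the bound \eqref{eq:mp1}, which the discussion preceding Theorem \ref{thm:mp} has already derived from Lemmas \ref{lem2} and \ref{lem3} together with Minkowski's integral inequality, and simply to carry out the interchange of $\mathbb{E}$ with the $\alpha$-integral and then evaluate the resulting inner integral in closed form. Since the integrand in \eqref{eq:mp1} is nonnegative, Tonelli's theorem applies with no extra hypothesis, giving
\begin{equation*}
d_{n,1}(F,G_\lambda) \leq \mathbb{E}\left[|\xi-\lambda|\int_0^1\left(\alpha\xi+(1-\alpha)\lambda\right)^{-n/2}\,d\alpha\right]\,;
\end{equation*}
the finiteness assumption in the statement is what ensures the right-hand side we ultimately obtain is finite (and otherwise the asserted inequality is vacuous).

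Next I would evaluate the inner integral for each fixed realisation of $\xi$. On the event $\{\xi=\lambda\}$ the integrand is the constant $\lambda^{-n/2}$ while the prefactor $|\xi-\lambda|$ vanishes, so this contributes $0$, which also matches the right-hand side of the claimed bound there; hence we may assume $\xi\neq\lambda$. The substitution $u=\alpha\xi+(1-\alpha)\lambda=\lambda+\alpha(\xi-\lambda)$, with $du=(\xi-\lambda)\,d\alpha$ and endpoints $u=\lambda$ at $\alpha=0$ and $u=\xi$ at $\alpha=1$, turns the integral into $\tfrac{1}{\xi-\lambda}\int_\lambda^\xi u^{-n/2}\,du$. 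For $n\neq 2$ this equals $\tfrac{1}{\xi-\lambda}\cdot\tfrac{2}{2-n}\bigl(\xi^{(2-n)/2}-\lambda^{(2-n)/2}\bigr)$; multiplying by $|\xi-\lambda|$ cancels the $(\xi-\lambda)$ factor, and since the left-hand side is manifestly nonnegative (the integrand $u^{-n/2}$ is positive on $[\lambda\wedge\xi,\lambda\vee\xi]$), the product equals $\bigl|\tfrac{2}{n-2}\bigr|\,\bigl|\xi^{(2-n)/2}-\lambda^{(2-n)/2}\bigr|$. For $n=2$ the integral is $\tfrac{\log(\xi/\lambda)}{\xi-\lambda}$, and multiplying by $|\xi-\lambda|$ leaves $\bigl|\log(\xi/\lambda)\bigr|$. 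Taking expectations in each case yields the two displayed bounds.

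There is no genuine obstacle here: the argument reduces to a single one-variable integral plus bookkeeping of signs. The only points needing a word of care are the degenerate event $\{\xi=\lambda\}$, disposed of as above, and the fact that the sign of $\xi-\lambda$ is irrelevant because both sides of the pathwise identity are nonnegative, which is precisely what makes the absolute values come out correctly whether $\xi>\lambda$ or $\xi<\lambda$. The interchange of expectation and integral is immediate from Tonelli, so the finiteness hypothesis is used only to guarantee that the final bound is meaningful.
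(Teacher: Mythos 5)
Your proposal is correct and follows essentially the same route as the paper, which likewise passes from (\ref{eq:mp1}) by interchanging the expectation with the $\alpha$-integral and evaluating $\int_0^1(\alpha\xi+(1-\alpha)\lambda)^{-n/2}\,d\alpha$ in closed form. Your remark that Tonelli makes the interchange automatic by nonnegativity, so the finiteness hypothesis serves only to make the stated bound non-vacuous, is a small but accurate sharpening of the paper's phrasing.
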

We illustrate this result by returning to the setting of Example \ref{eg:p1}, the P\'olya distribution.
\begin{example}\label{eg:p2}
\emph{Let $W$ have a P\'olya distribution, as described in Example \ref{eg:p1}.  We show how Theorem \ref{thm:mp} may be used to give a bound on the Wasserstein distance $d_W(W,Z_\lambda)=d_{0,1}(W,Z_\lambda)$ between $W$ and a Poisson distribution of the same mean.  To do this, we follow \cite{aa08} and construct $W$ as the mixed binomial distribution $\mbox{Bin}(m,\xi)$, where $\xi$ has a beta distribution with density function
$$
g(t)=B(\alpha,\beta)^{-1}t^{\alpha-1}(1-t)^{\beta-1}\,,
$$
for $t\in(0,1)$, where $B(\cdot,\cdot)$ is the beta function, $\alpha=r/c$ and $\beta=(N-r)/c$.}

\emph{Letting $Y\sim\mbox{Po}(m\xi)$ have a mixed Poisson distribution, we may condition on $\xi$ to obtain the bound $d_W(W,Y)\leq1.15\sqrt{m}\mathbb{E}\left[\xi^{3/2}\right]$ from equation (1.8) of \cite{bx06}.  Our Theorem \ref{thm:mp} gives $d_W(Y,Z_{\lambda})\leq m\mathbb{E}|\xi-p|$, where $p=\mathbb{E}\xi=r/N$.  The triangle inequality and H\"older's inequality then give
\begin{eqnarray}
\nonumber d_W(W,Z_\lambda)&\leq&1.15\sqrt{m}\mathbb{E}\left[\xi^{3/2}\right]+m\mathbb{E}|\xi-p|\\
\nonumber&\leq&1.15\sqrt{m}\left(\mathbb{E}\xi^2\right)^{3/4}+m\sqrt{\mbox{Var}(\xi)}\\
\label{eq:polya2}&=&1.15\sqrt{m}\left(\frac{r(r+c)}{N(N+c)}\right)^{3/4}+m\sqrt{\frac{cr(N-r)}{N^2(N+c)}}\,.
\end{eqnarray} 
Asymptotically, this bound behaves similarly to that derived in Example \ref{eg:p1} above.  For example, if $m$ is of order $O(N)$ and $c$ and $r$ are both of order $O(1)$, then each of the bounds (\ref{eq:polya}) and (\ref{eq:polya2}) are of order $O(1)$.  However, numerical studies suggest that in practice (\ref{eq:polya2}) performs better than (\ref{eq:polya}).}
\end{example} 
In the case of the total variation distance $d_{TV}=\frac{1}{2}d_{1,1}$, Theorem \ref{thm:mp} gives the following.

\begin{crllr}\label{cor:mp}
Let $F$ and $G_\lambda$ be as in Theorem \ref{thm:mp}.  For any $\epsilon\in[0,1/2]$
$$
d_{TV}(F,G_\lambda) \leq \frac{\left(\mathbb{E|\xi-\lambda|}\right)^{1/2+\epsilon}}{\lambda^\epsilon}\,.
$$
\end{crllr}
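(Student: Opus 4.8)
The plan is to derive this corollary directly from Theorem~\ref{thm:mp} in the case $n=1$, followed by a single application of H\"older's inequality. Since $d_{TV}=\tfrac12 d_{1,1}$, Theorem~\ref{thm:mp} with $n=1$ (so $n\neq 2$) gives immediately
$$
d_{TV}(F,G_\lambda)=\tfrac12 d_{1,1}(F,G_\lambda)\le \mathbb{E}\left|\sqrt{\xi}-\sqrt{\lambda}\right|,
$$
so it remains only to show that $\mathbb{E}|\sqrt{\xi}-\sqrt{\lambda}|\le \lambda^{-\epsilon}\left(\mathbb{E}|\xi-\lambda|\right)^{1/2+\epsilon}$ for each $\epsilon\in[0,1/2]$.

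Next I would rewrite $|\sqrt{\xi}-\sqrt{\lambda}|=|\xi-\lambda|/(\sqrt{\xi}+\sqrt{\lambda})$ and split the integrand as
$$
\left|\sqrt{\xi}-\sqrt{\lambda}\right| = |\xi-\lambda|^{1/2+\epsilon}\cdot\frac{|\xi-\lambda|^{1/2-\epsilon}}{\sqrt{\xi}+\sqrt{\lambda}}.
$$
For $\epsilon\in[0,1/2)$ I apply H\"older's inequality with conjugate exponents $p=(1/2+\epsilon)^{-1}$ and $q=(1/2-\epsilon)^{-1}$. The first factor contributes exactly $\left(\mathbb{E}|\xi-\lambda|\right)^{1/2+\epsilon}$, while, using $(1/2-\epsilon)q=1$, the second factor contributes $\left(\mathbb{E}\bigl[|\xi-\lambda|(\sqrt{\xi}+\sqrt{\lambda})^{-q}\bigr]\right)^{1/q}$.

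To control this second factor I would use the elementary identity $|\xi-\lambda|=|\sqrt{\xi}-\sqrt{\lambda}|(\sqrt{\xi}+\sqrt{\lambda})\le(\sqrt{\xi}+\sqrt{\lambda})^{2}$ together with the fact that $q\ge 2$ (which holds since $1-2\epsilon\le 1$) to get $|\xi-\lambda|(\sqrt{\xi}+\sqrt{\lambda})^{-q}\le(\sqrt{\xi}+\sqrt{\lambda})^{2-q}\le\lambda^{(2-q)/2}$, the last step because $2-q\le 0$ and $\sqrt{\xi}+\sqrt{\lambda}\ge\sqrt{\lambda}$. Taking $q$th roots yields $\lambda^{1/q-1/2}=\lambda^{-\epsilon}$, and multiplying the two factors gives the desired bound. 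The boundary case $\epsilon=1/2$ is handled separately and more simply, by noting the pointwise inequality $|\sqrt{\xi}-\sqrt{\lambda}|\le|\xi-\lambda|/\sqrt{\lambda}$.

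I do not expect a genuine obstacle: the only points needing care are the bookkeeping of the H\"older exponents, verifying $q\ge 2$ so that the power $2-q$ is non-positive, and separating out $\epsilon=1/2$ where $q$ degenerates. One should also remark that since $\mathbb{E}[\xi]=\lambda<\infty$ we have $\mathbb{E}|\xi-\lambda|<\infty$ automatically, so there is no integrability difficulty in applying H\"older; the integrability hypothesis inherited from Theorem~\ref{thm:mp} (with $n=1$) is precisely what legitimises the first inequality above.
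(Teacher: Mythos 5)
Your proposal is correct and follows essentially the same route as the paper: apply Theorem~\ref{thm:mp} with $n=1$ to reduce to $d_{TV}(F,G_\lambda)\leq\mathbb{E}|\sqrt{\xi}-\sqrt{\lambda}|$, then interpolate via H\"older's inequality. The only difference is that where the paper defers to Lemma 1 of Roos \cite{r03} for the bound on $\mathbb{E}|\sqrt{\xi}-\sqrt{\lambda}|$, you prove it from scratch with the explicit factorisation $|\xi-\lambda|^{1/2+\epsilon}\cdot|\xi-\lambda|^{1/2-\epsilon}(\sqrt{\xi}+\sqrt{\lambda})^{-1}$ and the exponent bookkeeping checks out, so your version is self-contained but not materially different.
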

\begin{proof} 
From Theorem \ref{thm:mp} we have that $d_{TV}(F,G_\lambda)\leq\mathbb{E}|\sqrt{\xi}-\sqrt{\lambda}|$.  This expectation may be bounded 
using Lemma 1 of \cite{r03} and H\"older's inequality to give the required result.  
\end{proof}
We note however, that bounds superior to that given by Corollary \ref{cor:mp} may be available elsewhere.  For example, consider the case where $W$ has a negative binomial distribution.  That is, assume that $\xi$ has a gamma distribution with density function
$$
g(t) = \frac{1}{\Gamma(\beta)}\left(\frac{1-q}{q}\right)^\beta t^{\beta-1}\exp\left\{-t\left(\frac{1-q}{q}\right)\right\}\,,
$$ 
for $t>0$, for some $\beta\in(0,\infty)$ and $q\in(0,1)$.  Note that $\lambda=\beta q(1-q)^{-1}$, $\mbox{Var}(\xi)=\beta q^2(1-q)^{-2}$ and
$$
\mathbb{E}|\xi-\lambda| = \frac{2q\beta^\beta e^{-\beta}}{(1-q)\Gamma(\beta)}\leq\frac{q}{1-q}\sqrt{\frac{2\beta}{\pi}}\,,
$$
where this inequality uses a slight generalisation of Proposition A.2.9 of \cite{bhj92} whose proof is straightforward.  Thus, evaluating the bound of 
Corollary \ref{cor:mp}, and in particular with the choices $\epsilon=0$ and $\epsilon=1/2$, we obtain that in the negative binomial case
\begin{equation}\label{eq:nb1}
d_{TV}(F,G_\lambda)\leq\sqrt{\frac{q}{1-q}}\min\left\{\sqrt{\frac{2}{\pi}},\left(\frac{2\beta}{\pi}\right)^{1/4}\right\}\,.
\end{equation}
For comparison, Roos \cite{r03} obtains the bound
\begin{equation}\label{eq:nb2}
d_{TV}(F,G_\lambda)\leq\beta\left(\frac{q}{1-q}\right)^2\min\left\{\frac{3(1-q)}{4e\beta q},1\right\}\,,
\end{equation}
and shows that it is superior to many others available in the literature.  Note that, regardless of the value of $\beta$, the bound (\ref{eq:nb1}) is of order $O(\sqrt{q})$ while (\ref{eq:nb2}) has order at least as good as $O(q)$. 

\section{The binomial case}\label{sec:bin}

The results that we have stated in previous sections (based on Lemma \ref{lem1}) are closely related to the Poisson distribution, since Lemma \ref{lem1} is 
itself closely related to the Poisson distribution. In 
this section we turn our attention to results in the binomial case.  We consider results analogous to those in Sections \ref{sec:order} and \ref{sec:mp}.  In doing 
this, 
we will use a Markov chain constructed by Yu \cite{y08} and used in proving an upper bound on entropy.  

We begin with some useful definitions.  Throughout this section let $W$ be a non-negative, integer-valued random variable supported on $\{0,1,\ldots,n\}$, for 
some integer $n>0$, and with mean $\lambda=nr>0$.  We will let $Z\sim\mbox{Bin}(n,r)$, a binomial random variable with the same support and mean as $W$.  

We recall that a random variable $X$ supported on $\{0,1,\ldots,n\}$ is ultra log-concave of degree $n$, denoted ULC($n$) in the sequel, if
$$
\frac{\mathbb{P}(X=i+1)^2}{\binom{n}{i+1}^2}\geq\frac{\mathbb{P}(X=i)}{\binom{n}{i}}\frac{\mathbb{P}(X=i+2)}{\binom{n}{i+2}}\,,
$$
for $0\leq i\leq n-2$.  We refer the reader to \cite{p00} for further discussion of this property.  We note here that the ULC($n$) property is intended to capture 
negative dependence, in a similar way to the ULC($\infty$) property and the other negative dependence assumptions we have discussed in Section \ref{sec:order}.

For those $W$ which are ULC($n$), Yu \cite[Theorem 1]{y08} proves that $H(W)\leq H(Z)$.  This is an analogue of Theorem 2.5 of \cite{j07}, which we generalised in 
our Corollary \ref{cor:entropy1}.  The proof of Yu's result employs a Markov chain $\{X_t:t\in\mathbb{Z}^+\}$, whose construction we now outline.  Further details 
and discussion are provided by \cite{y08}. 

We let $X_0$ have the same distribution as $W$.  The random variable $X_{t}$ (for $t\geq1$) is given by \begin{equation}\label{eq:xdef}
X_{t}=H_n(X_{t-1}+\eta_{t-1})\,,
\end{equation} 
where $\eta_0,\eta_i,\ldots$ are iid Bernoulli random variables with mean $r$ and the operator $H_n$ is such that for a non-negative, integer-valued random variable 
$X$ supported on $\{0,1,\ldots,n\}$
$$
\mathbb{P}(H_nX=i)=\frac{(n-i)}{n}\mathbb{P}(X=i)+\frac{(i+1)}{n}\mathbb{P}(X=i+1)\,,
$$ 
for $0\leq i\leq n-1$.  The operator $H_n$ is referred to as hypergeometric thinning, since, conditional on $X$, $H_nX$ has a hypergeometric distribution.  This is the
analogue of the (binomial) thinning operator $T_\alpha$ defined in Section \ref{sec:intro}.  Recall that, conditional on $X$, $T_\alpha X$ has a binomial distribution.  

In proving his entropy bound, Yu \cite{y08} uses the random variables $\{X_t:t\in\mathbb{Z}^+\}$ in a role analogous to that of the random variables 
$\{W_\alpha:0\leq\alpha\leq1\}$ in the corresponding bound for the Poisson case \cite[Theorem 2.5]{j07}.  We use the remainder of this section to examine how the 
techniques we have developed in our previous work may be carried over into this binomial setting.  We begin with the analogue of Lemma \ref{lem1}.

Writing $p_t(i)=\mathbb{P}(X_t=i)$, Yu \cite{y08} shows that for $t\geq0$
\begin{equation}\label{eq:mc}
p_{t+1}(i)=\frac{(n+1-i)(sp_t(i)+rp_t(i-1))+(i+1)(sp_t(i+1)+rp_t(i))}{n+1}\,,
\end{equation}
where $s=1-r$.  We note that $X_t$ is supported on $\{0,1,\ldots,n\}$ and has expectation $nr$ for each $t\in\mathbb{Z}^+$.  The key property of this Markov chain 
is that as $t\rightarrow\infty$, $X_t$ converges in distribution to the binomial distribution $\mbox{Bin}(n,r)$.

Now, given a random variable $W$ supported on $\{0,1,\ldots,n\}$, define the random variable $W^+$ by 
$$
\mathbb{P}(W^+=j)=\frac{n+1-j}{n(1-r)}\mathbb{P}(W+1=j)\,,
$$
for $1\leq i\leq n$.  Straightforward manipulations of (\ref{eq:mc}) then allow us to see the following result, analogous to our Lemma \ref{lem1} for the Poisson case.
\begin{lmm}\label{lem:bin1}
Let $W$ be a random variable supported on $\{0,1,\ldots,n\}$ with mean $nr>0$.  Then for $t\in\mathbb{Z}^+$ and $0\leq j\leq n$
$$
\mathbb{P}(X_t=j)-\mathbb{P}(X_{t+1}=j) = \frac{nr(1-r)}{n+1}\Delta\left[\mathbb{P}(X_t^+=j)-\mathbb{P}(X_t^\star=j)\right]\,.
$$
\end{lmm}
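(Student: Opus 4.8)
The plan is to verify the identity by direct computation starting from the recursion (\ref{eq:mc}), rewriting everything in terms of the two auxiliary distributions $W^+$ and $W^\star$ applied to $X_t$, and then recognising the right-hand side as a forward difference. First I would record the three relevant mass functions explicitly. From the definition of $W^+$ we have $\mathbb{P}(X_t^+=j)=\frac{n+1-j}{n(1-r)}p_t(j-1)$ for $1\le j\le n$ (and the boundary value at $j=0$ handled separately, since $p_t(-1)=0$), while from the size-biasing definition (\ref{eq:sbdef2}) we have $\mathbb{P}(X_t^\star=j)=\frac{j\,p_t(j)}{nr}$, using $\mathbb{E}[X_t]=nr$. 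The quantity $\Delta[\mathbb{P}(X_t^+=j)-\mathbb{P}(X_t^\star=j)]$ then expands, via $\Delta f(j)=f(j+1)-f(j)$, into a linear combination of $p_t(j-1),p_t(j),p_t(j+1)$ with coefficients that are affine in $j$.

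The key step is then to show that $\frac{nr(1-r)}{n+1}$ times this combination equals $p_t(j)-p_{t+1}(j)$. I would take the recursion (\ref{eq:mc}) for $p_{t+1}(j)$, substitute $s=1-r$, group the terms by which of $p_t(j-1)$, $p_t(j)$, $p_t(j+1)$ they multiply, and subtract from $p_t(j)$; the result is again a combination of $p_t(j-1),p_t(j),p_t(j+1)$ with coefficients affine in $j$. The proof reduces to checking that the two coefficient triples agree. Concretely: the $p_t(j-1)$ coefficient from the difference side should be $-\frac{(n+1-j)r}{n+1}$, and indeed $\frac{nr(1-r)}{n+1}\cdot\frac{n+1-j}{n(1-r)}=\frac{(n+1-j)r}{n+1}$ matches (up to the sign coming from $\Delta$ acting so that $p_t(j-1)$ appears in $\mathbb{P}(X_t^+=j)$ but $p_t(j)$ appears in $\mathbb{P}(X_t^+=j+1)$ with a shifted coefficient — these must be combined). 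Similarly one checks the $p_t(j)$ and $p_t(j+1)$ coefficients against $1$ minus the corresponding coefficients in (\ref{eq:mc}). Because all coefficients are affine in $j$, it suffices to verify the identity at two values of $j$, or simply to expand both sides symbolically once.

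I expect the main obstacle to be bookkeeping at the boundaries $j=0$ and $j=n$, where some of $p_t(j-1)$, $p_t(j+1)$, or the defining formulas for $W^+$ and $W^\star$ formally reach outside the support $\{0,\ldots,n\}$. The clean way to handle this is to adopt the conventions $p_t(i)=0$ for $i<0$ or $i>n$ and $\binom{a}{b}=0$ for $b>a$ (the latter already in force per the remark before Theorem~\ref{thm:order}), and to note that the factor $n+1-j$ in the definition of $W^+$ vanishes at $j=n+1$ and the factor $j$ in (\ref{eq:sbdef2}) vanishes at $j=0$, so that the affine-in-$j$ identity extends automatically to the endpoints without special casing. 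With these conventions in place the computation is the same "straightforward manipulation of (\ref{eq:mc})" promised in the text: collect coefficients, match term by term, done.
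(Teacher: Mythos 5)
Your proposal is correct and matches the paper's (unstated but clearly intended) argument: the paper itself only says the lemma follows by ``straightforward manipulations of (\ref{eq:mc})'', and your coefficient-matching computation --- the $p_t(j-1)$, $p_t(j)$, $p_t(j+1)$ coefficients on both sides all reduce to $-\tfrac{r(n+1-j)}{n+1}$, $\tfrac{nr+j(1-2r)}{n+1}$ and $-\tfrac{(1-r)(j+1)}{n+1}$ respectively --- is exactly that manipulation, with the boundary conventions handled sensibly.
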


\subsection{Convex ordering and ULC($n$)}

We use the next part of this section to explore stochastic ordering properties similar to those considered previously in Section \ref{sec:order}.  We will make use 
of ultra log-concavity, and will assume that $W$ is ULC($n$).  For such $W$ we have that $W^+\geq_{st}W^\star$ and that 
$X_t$ is ULC($n$) for all $t\in\mathbb{Z}$.  See \cite[Lemma 3]{y08}.  Combining these facts we immediately see that if $W$ is ULC($n$) then $X_t^+\geq_{st}X_t^\star$ 
for all $t\in\mathbb{Z}^+$.  We may then derive the following result, which plays the role of Theorem \ref{thm:order2} in the binomial case.
\begin{thrm}\label{thm:binorder}
Let $W$ be ULC($n$) with support $\{0,1,\ldots,n\}$ and mean $nr>0$.  Let $Z\sim\mbox{Bin}(n,r)$ and $X_t$ be given by (\ref{eq:xdef}).  Then $X_t\leq_{cx}X_u$ for all $t\leq u$.  
In particular, $W\leq_{cx}Z$.
\end{thrm}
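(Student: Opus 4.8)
The plan is to parallel the proof of Theorem~\ref{thm:order2}, with the Markov chain $\{X_t:t\in\mathbb{Z}^+\}$ in the role there played by the family $\{W_\alpha:0\le\alpha\le1\}$ and a telescoping sum replacing the integration over $\alpha$. Since $\mathbb{E}[X_t]=nr$ for every $t$, the standard implication recalled at the start of Section~\ref{sec:order} (``$X\le_{icx}Y$ together with $\mathbb{E}X=\mathbb{E}Y$ gives $X\le_{cx}Y$'') reduces everything to showing $X_t\leq_{icx}X_{t+1}$ for each $t$; transitivity of $\leq_{cx}$ then yields $X_t\leq_{cx}X_u$ for all $t\le u$, and the final assertion will follow by a limiting argument since $X_0\stackrel{d}{=}W$.

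To prove $X_t\leq_{icx}X_{t+1}$, i.e. $X_t\leq_{2-cx}X_{t+1}$, I would invoke the Lef\`evre--Utev characterisation (Proposition~2.5 of \cite{lu96}): as the means coincide, it suffices to show $h_2(X_t,j)\le h_2(X_{t+1},j)$ for $j\ge2$, where $h_k$ is defined by (\ref{eq:hdef}). Applying $\Delta^{-2}$ to both sides of the identity in Lemma~\ref{lem:bin1} --- exactly as $\Delta^{-(s+1)}$ was applied to Lemma~\ref{lem1} in the proof of Theorem~\ref{thm:order} --- and using $\Delta^{-1}\Delta=\mathrm{id}$ together with the fact that $\Delta^{-1}$ sends a mass function to minus the corresponding survival function (all boundary terms vanish, since everything is supported on $\{0,\ldots,n\}$), one is led to
$$
h_2(X_{t+1},j)-h_2(X_t,j)=\frac{nr(1-r)}{n+1}\bigl[\mathbb{P}(X_t^+\ge j)-\mathbb{P}(X_t^\star\ge j)\bigr]\,,
$$
valid in particular for $j\ge2$. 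The constant $\frac{nr(1-r)}{n+1}$ is non-negative, and the bracketed term is non-negative because $X_t^+\geq_{st}X_t^\star$ for every $t$ --- a fact noted in the text above, obtained by combining $W^+\geq_{st}W^\star$ with the fact (\cite[Lemma~3]{y08}) that each $X_t$ remains ULC($n$). Hence $h_2(X_{t+1},\cdot)\ge h_2(X_t,\cdot)$ pointwise, which together with the equality of means gives $X_t\leq_{icx}X_{t+1}$.

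Finally, $X_t$ converges in distribution to $Z\sim\mbox{Bin}(n,r)$, and since all the variables are supported on the fixed finite set $\{0,\ldots,n\}$ we have $\mathbb{E}f(X_u)\to\mathbb{E}f(Z)$ for every convex $f$; letting $u\to\infty$ in $X_t\leq_{cx}X_u$ gives $X_t\leq_{cx}Z$ for all $t$, and $t=0$ gives $W\leq_{cx}Z$. I expect the only genuine work to be bookkeeping --- tracking the signs through the two applications of $\Delta^{-1}$ and checking that the discrete summation by parts leaves no boundary contribution, in complete analogy with the passage from Lemma~\ref{lem1} to (\ref{eq:lu2}) in the proof of Theorem~\ref{thm:order}. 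The substantive ingredient, the ordering $X_t^+\geq_{st}X_t^\star$, is already furnished by \cite{y08}, so no new negative-dependence estimate needs to be established here.
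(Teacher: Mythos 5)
Your proposal is correct and follows essentially the same route as the paper: reduce to the increasing convex comparison of $X_t$ and $X_{t+1}$ via Proposition~2.5 of \cite{lu96}, apply $\Delta^{-2}$ to the identity of Lemma~\ref{lem:bin1} to express $h_2(X_{t+1},j)-h_2(X_t,j)$ as a positive multiple of $h_1(X_t^+,j)-h_1(X_t^\star,j)$, and conclude from $X_t^+\geq_{st}X_t^\star$, which follows from ULC($n$) being preserved by the chain. The telescoping/limiting step for $u\to\infty$ is likewise exactly what the paper does.
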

\begin{proof} 
We use the ideas and notation of the proof of Theorem \ref{thm:order}.  As in the proof of that result, Proposition 2.5 of \cite{lu96} 
gives us that we need only show that $h_2(X_t,j)\leq h_2(X_{t+1},j)$ for each $t\in\mathbb{Z}^+$ and $0\leq j\leq n$.  The first statement in the theorem follows 
easily from this, and the final statement by taking $t=0$ and $u\rightarrow\infty$ in the first.  

As noted before, for $W$ a ULC($n$) random variable, we have that $X_t^+\geq_{st}X_t^\star$ for each $t\in\mathbb{Z}^+$.  Hence $h_1(X_t^+,j)\geq h_1(X_t^\star,j)$ 
for all $t\in\mathbb{Z}^+$ and $0\leq j\leq n$.

Now, by Lemma \ref{lem:bin1} we have that
$$
h_0(X_t,j)-h_0(X_{t+1},j)=\frac{nr(1-r)}{n+1}\Delta\left[h_0(X_t^+,j)-h_0(X_t^\star,j)\right]\,,
$$ 
for each $t\in\mathbb{Z}^+$ and $0\leq j\leq n$.  Applying $\Delta^{-2}$ to this, we have that
$$
\frac{nr(1-r)}{n+1}\left[h_1(X_t^+,j)-h_1(X_t^\star,j)\right]=h_2(X_{t+1},j)-h_2(X_t,j)\geq0,
$$
as required.  
\end{proof}
From Theorem \ref{thm:binorder}, we may immediately recover the main result of Yu \cite{y08}, his Theorem 1, which we state in Corollary \ref{cor:binentropy} below.
\begin{crllr}\label{cor:binentropy}
Let $W$ be ULC($n$) with support $\{0,1,\ldots,n\}$ and mean $nr>0$.  Let $Z\sim\mbox{Bin}(n,r)$.  Then
$$
H(W) \leq H(Z)\,.
$$
\end{crllr}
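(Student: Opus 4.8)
The plan is to combine Theorem \ref{thm:binorder} with a known fact relating the convex order to entropy for log-concave targets, exactly as Corollary \ref{cor:entropy1} was deduced from Theorem \ref{thm:order2} in the Poisson case. First I would invoke Theorem \ref{thm:binorder} to obtain $W\leq_{cx}Z$ where $Z\sim\mbox{Bin}(n,r)$. Second, I would observe that $Z$ is a log-concave random variable: indeed the binomial mass function satisfies $\mathbb{P}(Z=i)^2\geq\mathbb{P}(Z=i-1)\mathbb{P}(Z=i+1)$ for all $i$, which is an elementary check on the ratios of binomial coefficients together with the $r^i(1-r)^{n-i}$ factors, and its support is the interval $\{0,1,\ldots,n\}$. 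Third, I would apply Lemma 1 of Yu \cite{y09}, which states that if $U\leq_{cx}V$ and $V$ is log-concave then $H(U)\leq H(V)$; with $U=W$ and $V=Z$ this gives $H(W)\leq H(Z)$, as required.

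The only point needing a little care is that $W$ and $Z$ must have the same mean for the $s=2$ part of the machinery to bite, but this is built into the hypotheses: $W$ has mean $nr$ and $Z\sim\mbox{Bin}(n,r)$ also has mean $nr$, so in fact the $\leq_{icx}$ statement underlying Theorem \ref{thm:binorder} upgrades to the full convex order $\leq_{cx}$, which is precisely what Lemma 1 of \cite{y09} requires. No further hypotheses on $W$ beyond ULC($n$) are needed, since ULC($n$) is exactly the assumption under which Theorem \ref{thm:binorder} was established.

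I do not anticipate a substantive obstacle: the result is a direct corollary, and the entire analytic content has already been discharged in the proof of Theorem \ref{thm:binorder} (via Lemma \ref{lem:bin1} and the stochastic ordering $W^+\geq_{st}W^\star$ for ULC($n$) variables). The mild technical step is verifying log-concavity of the binomial distribution, but this is standard and immediate. Thus the proof is essentially one line: $W\leq_{cx}Z$ by Theorem \ref{thm:binorder}, $Z$ is log-concave, hence $H(W)\leq H(Z)$ by Lemma 1 of \cite{y09}.
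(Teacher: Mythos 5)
Your proposal is correct and matches the paper's proof exactly: both deduce $W\leq_{cx}Z$ from Theorem \ref{thm:binorder}, note that the binomial distribution is log-concave, and conclude via Lemma 1 of \cite{y09}. The extra remarks on equal means and on verifying log-concavity are fine but not needed beyond what the paper states.
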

\begin{proof}
Since $W\leq_{cx}Z$ (by Theorem \ref{thm:binorder}) and $Z$ is a log-concave random variable, this follows immediately from Lemma 1 of 
\cite{y09}.
\end{proof}
We also have the following, the analogue of Corollary \ref{cor:entropy2}.
\begin{crllr}\label{cor:binentropy2}
Let $W$ be ULC($n$) with support $\{0,1,\ldots,n\}$ and mean $nr>0$.  Let $Z\sim\mbox{Bin}(n,r)$ and 
$Y_1,Y_2,\ldots,$ be iid non-negative, integer-valued random variables.  Let
$$
\widehat{W}=\sum_{i=1}^WY_i\,,\hspace{20pt}\mbox{ and }\hspace{20pt}\widehat{Z}=\sum_{i=1}^{Z}Y_i\,.
$$
If $\widehat{Z}$ is log-concave, then $H(\widehat{W})\leq H(\widehat{Z})$.
\end{crllr}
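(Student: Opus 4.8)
The plan is to reproduce, in the binomial setting, the argument used for Corollary \ref{cor:entropy2}, with Theorem \ref{thm:binorder} in the role played there by Theorem \ref{thm:order2}. First I would apply Theorem \ref{thm:binorder} to the ULC($n$) variable $W$ to obtain the convex ordering $W \leq_{cx} Z$, where $Z \sim \mbox{Bin}(n,r)$. Since $\mathbb{E}[W] = \mathbb{E}[Z] = nr$, the compound sums have equal means, $\mathbb{E}[\widehat{W}] = nr\,\mathbb{E}[Y_1] = \mathbb{E}[\widehat{Z}]$, so comparing $\widehat{W}$ and $\widehat{Z}$ in the convex order is the natural next step.

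The substance of the proof is the passage from $W \leq_{cx} Z$ to $\widehat{W} \leq_{cx} \widehat{Z}$, that is, the fact that the convex order is preserved under compounding by a fixed iid sequence $Y_1, Y_2, \ldots$. This is standard (see \cite{ss07}), and for completeness can be seen directly: write $S_k = Y_1 + \cdots + Y_k$ and, for a convex $\phi : \mathbb{Z}^+ \mapsto \mathbb{R}$, set $g_\phi(k) = \mathbb{E}[\phi(S_k)]$. For integers $y', y'' \geq 0$ and $a \geq 0$, convexity gives $\phi(a + y' + y'') + \phi(a) \geq \phi(a + y') + \phi(a + y'')$; integrating this over the laws of two independent copies of $Y_1$ and then over the law of $S_k$ shows that $g_\phi$ is (discretely) convex. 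Hence $\mathbb{E}[\phi(\widehat{W})] = \mathbb{E}[g_\phi(W)] \leq \mathbb{E}[g_\phi(Z)] = \mathbb{E}[\phi(\widehat{Z})]$, using $W \leq_{cx} Z$, and therefore $\widehat{W} \leq_{cx} \widehat{Z}$.

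Finally, under the hypothesis that $\widehat{Z}$ is log-concave, Lemma 1 of \cite{y09} (the entropy comparison already invoked in the proof of Corollary \ref{cor:entropy1}) applies to $\widehat{W} \leq_{cx} \widehat{Z}$ and yields $H(\widehat{W}) \leq H(\widehat{Z})$, which is the claim. In fact the last two steps together are precisely the content of Theorem 1 of \cite{y09}, so the final write-up can be compressed to a single sentence: combine Theorem \ref{thm:binorder} with Theorem 1 of \cite{y09}. The only point needing a moment's thought, and the nearest thing to an obstacle here, is to confirm that Theorem 1 of \cite{y09} is phrased in terms of a convex ordering between the counting random variables (rather than specifically for a Poisson counting variable), so that it applies with the binomial $Z$ in place of a Poisson variable; it is.
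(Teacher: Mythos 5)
Your proposal is correct and matches the paper's proof, which is exactly the one-line combination of Theorem \ref{thm:binorder} with Theorem 1 of \cite{y09}; your extra verification that the convex order is preserved under compounding is the content already packaged inside Yu's theorem, as you note yourself.
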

\begin{proof}
Combine our Theorem \ref{thm:binorder} with Theorem 1 of \cite{y09}.
\end{proof}
Note that Corollary \ref{cor:binentropy2} generalises Theorem 2 of \cite{y09}, since a sum of $n$ independent Bernoulli random variables is ULC($n$).

We conclude this subsection by observing that we may also obtain concentration inequalities and binomial approximation results as corollaries of our Theorem
\ref{thm:binorder}, as in the Poisson case of Section \ref{sec:order}.  The proofs of these results are analogous to their Poisson counterparts in Section \ref{sec:order}.
\begin{crllr}
Let $W$ be ULC($n$) with support $\{0,1,\ldots,n\}$ and mean $\lambda=nr>0$.  Let $t>0$.
\begin{eqnarray*}
\mathbb{P}(W\geq\lambda+t) &\leq& \left[\frac{(1-r)(\lambda+t)}{(1-r)\lambda-rt}\right]^{-(t+\lambda)}\left[1-r+\frac{r(1-r)(\lambda+t)}{(1-r)\lambda-rt}\right]^n\,,\\ 
\mathbb{P}(W\leq\lambda-t) &\leq& \left[\frac{(1-r)(\lambda-t)}{(1-r)\lambda+rt}\right]^{t-\lambda}\left[1-r+\frac{r(1-r)(\lambda-t)}{(1-r)\lambda+rt}\right]^n\,,\\
\end{eqnarray*}
where the last inequality applies if $t<\lambda$.
\end{crllr}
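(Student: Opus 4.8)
The plan is to mimic the proof of Corollary \ref{cor:conc}, replacing the role of the Poisson comparison by the binomial comparison furnished by Theorem \ref{thm:binorder}. For the upper tail, fix $\theta>0$ and apply Markov's inequality to $e^{\theta W}$ to obtain
$$
\mathbb{P}(W\geq\lambda+t)\leq e^{-\theta(\lambda+t)}\mathbb{E}[e^{\theta W}]\,.
$$
Since $x\mapsto e^{\theta x}$ is convex and $W\leq_{cx}Z$ by Theorem \ref{thm:binorder}, we have $\mathbb{E}[e^{\theta W}]\leq\mathbb{E}[e^{\theta Z}]=(1-r+re^\theta)^n$, the moment generating function of $Z\sim\mbox{Bin}(n,r)$. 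Hence $\mathbb{P}(W\geq\lambda+t)\leq e^{-\theta(\lambda+t)}(1-r+re^\theta)^n$.

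It then remains to minimise the right-hand side over $\theta>0$. Differentiating the logarithm of the bound and setting the derivative to zero yields $nre^\theta/(1-r+re^\theta)=\lambda+t$, whose solution (using $\lambda=nr$) is $re^\theta=(1-r)(\lambda+t)/(n-\lambda-t)$, equivalently $e^\theta=(1-r)(\lambda+t)/((1-r)\lambda-rt)$ after noting that $r(n-\lambda-t)=(1-r)\lambda-rt$. Substituting this value of $\theta$ back into $e^{-\theta(\lambda+t)}(1-r+re^\theta)^n$ produces exactly the first claimed inequality. One checks that the optimising $\theta$ is indeed positive whenever $t>0$, and that the denominator $(1-r)\lambda-rt$ is positive on the range $t<n-\lambda$ where the bound is of interest (for $t\geq n-\lambda$ the event $\{W\geq\lambda+t\}$ has probability zero since $W$ is supported on $\{0,\ldots,n\}$).

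The lower tail is handled in the same way, applying Markov's inequality to $e^{-\theta W}$ for $\theta>0$: convexity of $x\mapsto e^{-\theta x}$ together with $W\leq_{cx}Z$ gives $\mathbb{E}[e^{-\theta W}]\leq(1-r+re^{-\theta})^n$, so that $\mathbb{P}(W\leq\lambda-t)\leq e^{\theta(\lambda-t)}(1-r+re^{-\theta})^n$; optimising over $\theta>0$ leads to $e^{-\theta}=(1-r)(\lambda-t)/((1-r)\lambda+rt)$ and, on substitution, the second claimed inequality. Here one needs $t<\lambda$ for the optimising $\theta$ to be positive, which is precisely why that case is excluded. There is no real obstacle beyond the bookkeeping in the optimisation step; the only point requiring a little care is verifying that the stationary points of the Chernoff exponent correspond to a positive $\theta$ and lie in the admissible ranges of $t$.
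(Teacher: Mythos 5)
Your proposal is correct and follows exactly the route the paper intends: the paper gives no separate proof for this corollary, stating only that it is ``analogous to the Poisson counterpart'' (Corollary \ref{cor:conc}), i.e.\ Markov's inequality applied to $e^{\pm\theta W}$, the convex ordering $W\leq_{cx}Z$ from Theorem \ref{thm:binorder} to bound the moment generating function by that of $\mbox{Bin}(n,r)$, and optimisation over $\theta$. Your bookkeeping in the optimisation (including the identity $r(n-\lambda-t)=(1-r)\lambda-rt$ and the checks that the optimal $\theta$ is positive precisely when $t>0$, respectively $t<\lambda$) reproduces the stated bounds.
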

\begin{crllr}
Let $W$ be ULC($n$) with support $\{0,1,\ldots,n\}$ and mean $nr>0$.  Let $Z\sim\mbox{Bin}(n,r)$.  Then if $W$ has distribution function $F$ and
$Z$ has distribution function $G$,
$$
d_{-k,\infty}(F,G)\leq2^{(-k)_+-1}\left\{nr(1-r)-\mbox{Var}(W)\right\}\,,
$$
for $k\in\{-1,0,1,2\}$. 
\end{crllr}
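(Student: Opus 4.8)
The plan is to mimic the derivation of its Poisson counterpart, inequality~(\ref{eq:hyp1}), which was the $s=1$ specialisation of Corollary~\ref{cor:order}, substituting Theorem~\ref{thm:binorder} for Theorem~\ref{thm:order} throughout.

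First I would apply Theorem~\ref{thm:binorder}: since $W$ is ULC($n$) we obtain $W\leq_{cx}Z$ with $Z\sim\mbox{Bin}(n,r)$, and because the convex order always implies the increasing convex order this yields $W\leq_{2-cx}Z$. In terms of the functions $h_k$ of~(\ref{eq:hdef}) this is exactly $h_1(W,j)\leq h_1(Z,j)$ and $h_2(W,j)\leq h_2(Z,j)$ for the relevant $j$, which are precisely the hypotheses needed to run the argument behind Corollary~\ref{cor:order}.

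Next I would invoke the argument of Corollary~3.14 of~\cite{dlu02} exactly as in the proof of Corollary~\ref{cor:order}, now with source $W$, target $Z$ and $s=1$, to get
$$
d_{-k,\infty}(F,G)\leq 2^{(-k)_+}\,\mathbb{E}\left[\binom{Z+2}{2}-\binom{W+2}{2}\right]\,,\qquad k\in\{-1,0,1,2\}\,,
$$
where the $k=-1$ (local limit) endpoint uses log-concavity of $Z$, which holds since $Z$ is binomial. It then remains only to evaluate the expectation: using $\mathbb{E}W=\mathbb{E}Z=nr$,
$$
\mathbb{E}\left[\binom{Z+2}{2}-\binom{W+2}{2}\right]=\frac{1}{2}\left(\mathbb{E}[Z^2]-\mathbb{E}[W^2]\right)=\frac{1}{2}\left(\mbox{Var}(Z)-\mbox{Var}(W)\right)=\frac{1}{2}\left(nr(1-r)-\mbox{Var}(W)\right)\,,
$$
which is non-negative because $W\leq_{cx}Z$. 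Substituting gives the claimed bound with constant $2^{(-k)_+-1}$.

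The only step requiring genuine care is confirming that the quoted result of~\cite{dlu02} transfers from a Poisson to a binomial target. This is immediate: that result bounds $d_{1-k,\infty}$ for an \emph{arbitrary} pair of integer-valued random variables ordered in the $s$-convex sense, so nothing Poisson-specific is used, and the log-concavity of $Z$ needed for the sharpest exponent at $k=-1$ is equally available in the binomial case. Everything else is a routine transcription of the Poisson proof.
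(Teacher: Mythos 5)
Your proposal is correct and follows exactly the route the paper intends: the paper gives no separate proof of this corollary, stating only that it is analogous to its Poisson counterpart (Corollary \ref{cor:order} with $s=1$, i.e.\ the bound (\ref{eq:hyp1})), and your transcription via Theorem \ref{thm:binorder} plus the argument of Corollary 3.14 of \cite{dlu02}, followed by the evaluation $\mathbb{E}\bigl[\binom{Z+2}{2}-\binom{W+2}{2}\bigr]=\tfrac{1}{2}(nr(1-r)-\mbox{Var}(W))$, is that analogy carried out. One incidental misstatement: $W\leq_{2-cx}Z$ is characterised by $\mathbb{E}W\leq\mathbb{E}Z$ together with $h_2(W,j)\leq h_2(Z,j)$, not by a pointwise ordering of $h_1$ (which would be stochastic dominance), but this aside plays no role in your argument.
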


\subsection{Other results in the binomial case}

In Section \ref{sec:mp} we used our Lemma \ref{lem1} directly to provide a Poisson approximation result, Lemma \ref{lem2}.  Similarly, we have the following.
\begin{prpstn}\label{lem:bin2}
Let $W$ be a random variable supported on $\{0,1,\ldots,n\}$ with mean $nr>0$.  Let $F_t$ be the distribution function of $X_t$, for $t\in\mathbb{Z}^+$.  Then 
for $1\leq p\leq\infty$ and $n\in\mathbb{Z}$
$$
d_{n,p}(F_0,F_t)\leq\frac{nr(1-r)}{n+1}\sum_{u=0}^{t-1}d_{n+1,p}(F_u^+,F_u^\star)\,,
$$
where $F_u^+$ is the distribution function of $X_u^+$ and $F_u^\star$ is the distribution function of $X_u^\star$.
\end{prpstn}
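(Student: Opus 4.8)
The plan is to mimic the proof of Lemma \ref{lem2}, with the integral over $\alpha\in[0,1]$ replaced by a finite sum over the Markov-chain steps $u=0,1,\ldots,t-1$, and with Minkowski's integral inequality replaced by the ordinary triangle inequality for $\lVert\cdot\rVert_p$. Since $X_t$ is supported on the finite set $\{0,1,\ldots,n\}$ at every time step, there are no convergence or interchange-of-summation issues to worry about.

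First I would telescope, writing $F_0-F_t=\sum_{u=0}^{t-1}(F_u-F_{u+1})$ and hence $\Delta^n(F_0-F_t)=\sum_{u=0}^{t-1}\Delta^n(F_u-F_{u+1})$ by linearity of $\Delta^n$, so that by the triangle inequality
$$
d_{n,p}(F_0,F_t)=\lVert\Delta^n(F_0-F_t)\rVert_p\leq\sum_{u=0}^{t-1}\lVert\Delta^n(F_u-F_{u+1})\rVert_p\,.
$$

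The key step is then to rewrite each summand using Lemma \ref{lem:bin1}. That lemma is stated for mass functions, so I would first pass to distribution functions: summing the identity $\mathbb{P}(X_u=i)-\mathbb{P}(X_{u+1}=i)=\frac{nr(1-r)}{n+1}\Delta[\mathbb{P}(X_u^+=i)-\mathbb{P}(X_u^\star=i)]$ over $i=0,\ldots,j$, the forward difference telescopes and the boundary term at $i=0$ vanishes because $\mathbb{P}(X_u^+=0)=\mathbb{P}(X_u^\star=0)=0$ (both $X_u^+$ and the size-biased variable $X_u^\star$ are supported on positive integers). This yields $F_u(j)-F_{u+1}(j)=\frac{nr(1-r)}{n+1}\Delta(F_u^+-F_u^\star)(j)$, and applying $\Delta^n$ to both sides gives $\Delta^n(F_u-F_{u+1})=\frac{nr(1-r)}{n+1}\Delta^{n+1}(F_u^+-F_u^\star)$. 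Substituting this into the displayed bound and recognising $\lVert\Delta^{n+1}(F_u^+-F_u^\star)\rVert_p=d_{n+1,p}(F_u^+,F_u^\star)$ completes the proof.

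There is no serious obstacle here; this is essentially a transcription of the Poisson-case argument into the discrete-time setting. The only point requiring a little care is the passage from the mass-function identity of Lemma \ref{lem:bin1} to the corresponding distribution-function identity, i.e. checking that applying $\Delta^{-1}$ (summation from $0$) does not introduce an additive constant, which is precisely the vanishing of the boundary term noted above. One should also observe that when $n<0$ the operator is applied only to the differences $F_u-F_{u+1}$, which are finitely supported, so the defining series for $\Delta^{-1}$ converge and the manipulations remain valid.
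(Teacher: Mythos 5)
Your proof is correct and follows essentially the same route as the paper: telescoping $F_0-F_t$ over the Markov-chain steps, converting via Lemma \ref{lem:bin1}, and applying the triangle inequality for $\lVert\cdot\rVert_p$ (which the paper invokes as Minkowski's inequality). Your explicit check that the boundary term vanishes when passing from the mass-function identity to the distribution-function identity is a detail the paper leaves implicit, but it does not alter the argument.
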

\begin{proof}
From the definition of $d_{n,p}$ we have that
\begin{eqnarray*}
d_{n,p}(F_0,F_t)&=&\left\lVert\Delta^{n}\sum_{u=0}^{t-1}[F_u-F_{u+1}]\right\rVert_p\\
&=&\frac{nr(1-r)}{n+1}\left\lVert\Delta^{n+1}\sum_{u=0}^{t-1}[F_u^+-F_u^\star]\right\rVert_p\\
&\leq&\frac{nr(1-r)}{n+1}\sum_{u=0}^{t-1}\left\lVert\Delta^{n+1}F_u^+-\Delta^{n+1}F_u^\star\right\rVert_p\,,
\end{eqnarray*}
where the second line uses Lemma \ref{lem:bin1} and the inequality uses Minkowski's integral inequality.
\end{proof}
It is worth noting, however, that we do not have a result analogous to Lemma \ref{lem3} here.  That is, suppose that $X_0=W\sim\mbox{Bin}(n,\xi)$ for some random 
variable $\xi$ supported on $[0,1]$, so that
$$
\mathbb{P}(W=i)=\binom{n}{i}\mathbb{E}[\xi^i(1-\xi)^{n-i}]\,.
$$
Then $X_1$ does not, in general, have a mixed binomial distribution.  
In the Poisson case, the preservation of Poisson mixtures under the operators $U_\alpha$ ($0\leq\alpha\leq1$), as given by Lemma \ref{lem3}, allowed us to easily and 
explicitly find a bound on the distance between a mixed Poisson random variable and a Poisson random variable with the same mean.  However, no such property holds in 
the binomial case we are considering here.

\subsection*{Acknowledgements}
The author gratefully acknowledges useful and interesting discussions with Oliver Johnson and Sergey Utev.  Thanks are also due to the Heilbronn Institute for Mathematical Research at the University of Bristol, where part of this work was completed, and to an anonymous referee, whose suggestions improved the quality and presentation of the work.

\end{document}